\newtheorem{theorem}{Theorem}[section]
\newtheorem{corollary}[theorem]{Corollary}
\newtheorem{lemma}[theorem]{Lemma}
\theoremstyle{definition}
\newtheorem{definition}[theorem]{Definition}
\newtheorem{example}[theorem]{Example}
\numberwithin{equation}{section}
\newcommand{\Q}{\mathbb{Q}}
\newcommand{\Po}{\mathbb{P}}
\newcommand{\K}{\mathcal{K}}
\newcommand{\supha}{{\upharpoonright}}
\begin{document}


\baselineskip=17pt


\title[Spaces with a $\mathbb{Q}$-diagonal]
{Spaces with a $\mathbb{Q}$-diagonal}

\author[Z. Feng]{Ziqin Feng}
\address{Department of Mathematics and Statistics, Auburn University, Auburn, AL 36849, USA}
\email{zzf0006@auburn.edu}

\date{}

\begin{abstract}
A space $X$ has a $\mathbb{Q}$-diagonal if $X^2\setminus \Delta$ has a $\mathcal{K}(\mathbb{Q})$-directed compact cover. We show that any compact space with a $\mathbb{Q}$-diagonal is metrizable, hence any Tychonoff space with a $\mathbb{Q}$-diagonal is cosmic. These give positive answers to Problem 4.2 and Problem 4.8 in \cite{COT11} raised by Cascales, Orihuela, and Tkachuk.
\end{abstract}

\subjclass[2010]{Primary 54E35; Secondary 03E15; 54B10}

\keywords{$\mathbb{Q}$-diagonal, metrizability, compact spaces, Tukey order, directed sets}

\maketitle

\section{Introduction}

For a directed set $P$ ordered by $\leq$, a collection $\mathcal{C}$ of subsets of a space $X$ is $P$-directed if $\mathcal{C}$ can be represented as $\{C_p: p\in P\}$ such that $C_p\subseteq C_{p'}$ whenever $p\leq p'$. Let $\mathcal{K}(M)$ be  the collection of all compact subsets of a topological space $M$. Then $\mathcal{K}(M)$ is a directed set ordered by set inclusion. A space $X$ is (strongly) dominated by $M$, or $M$-dominated,  if $X$ has a $\mathcal{K}(M)$-directed compact cover (which is cofinal in $\mathcal{K}(X)$). We say $X$ has an $M$-diagonal if $X^2\setminus \Delta$ is dominated by $M$, where $\Delta=\{(x, x): x\in X\}$. The purpose of this paper is to investigate spaces with a $\mathbb{Q}$-diagonal, where $\mathbb{Q}$ is the space of rational numbers. This is motivated by Sneider's result in \cite{S45} that any compact space $X$ with a $G_\delta$-diagonal (in the new notation, $\mathbb{R}$-diagonal where $\mathbb{R}$ is the space of real numbers) is metrizable. In \cite{M73}, it is proved that any completely regular pseudocompact space with a regular $G_\delta$-diagonal is metrizable.  In this paper, all the spaces are assumed to be completely regular and $T_1$.

The concept of $\mathbb{P}$-domination comes from the study of the geometry of topological vector spaces,  where $\Po$ is the space of irrationals. There are many applications of (strong) $\mathbb{P}$-domination in the area of functional analysis. 
One of the topological applications is to obtain metrization conditions for some class of spaces. In particular, Cascales and Orihuela in \cite{CO87} proved that any compact space $X$ with $X^2\setminus \Delta$ being strongly $\Po$-dominated is metrizable. In \cite{COT11}, it was proved that a compact space $X$ is metrizable if $X^2\setminus \Delta$ is strongly $M$-dominated for some separable metric space $M$. Recently, Gartside and Morgan in \cite{GMo16} proved that any compact space $X$ is metrizable if $X^2\setminus \Delta$ has a cofinal $P$-directed compact cover for some directed set $P$ with calibre $(\omega_1, \omega)$ (see definition in Section~\ref{prel}).
It is not known whether the word `strongly' or `cofinal' can be omitted in these results.

Cascales, Orihuela, and Tkachuk asked in \cite{COT11} whether a compact space with a $\Po$-diagonal ($\Q$-diagonal, or $M$-diagonal for some separable metric space $M$) is metrizable. In \cite{DH16}, Dow and Hart proved that compact spaces with a $\mathbb{P}$-diagonal are metrizable. Strengthening Dow and Hart's result, Guerrero S\'{a}nchez and Tkachuk in \cite{ST16} showed that any Tychonoff space with a $\mathbb{P}$-diagonal is cosmic (see definition in Section~\ref{prel}). In the same paper, they showed that under the continuum hypothesis (CH) any space with an $M$-diagonal for any separable metric space $M$ is cosmic, hence any compact space with an $M$-diagonal is metrizable. Our main positive result is that in ZFC any compact space with a $\Q$-diagonal is metrizable (Theorem~\ref{main}), hence any Tychonoff space with a $\Q$-diagonal is cosmic (Theorem~\ref{main2}). These answer Problem 4.2 and Problem 4.8 in \cite{COT11} positively. We also show that under $\mathfrak{d}>\mathfrak{b}=\omega_1$, any compact space with an $M$-diagonal for some separable metric space $M$ is metrizable (Theorem~\ref{smspace}). This gives another partial positive answer to Problem 4.3 in \cite{COT11} in addition to Guerrero S\'{a}nchez and Tkachuk's result under CH.

We will use Tukey order to compare the cofinal complexity of different $M$-diagonals. Given two directed sets $P$ and $Q$, we say $Q$ is a Tukey quotient of $P$, denoted by $P\geq_T Q$, if there is a map $\phi:P\rightarrow Q$ carrying cofinal subsets of $P$ to cofinal subsets of $Q$. In our context, where $P$ and $Q$ are both Dedekind complete (every bounded subset has a least upper bound), we have $P\geq_T Q$ if and only if there is a map $\phi: P\rightarrow Q$ which is order-preserving and such that $\phi(P)$ is cofinal in $Q$. If $P$ and $Q$ are mutually Tukey quotients, we say that $P$ and $Q$ are Tukey equivalent, denoted by $P=_T Q$. Fremlin observed that if a separable metric space $M$ is locally compact, then $\mathcal{K}(M)=_T\omega$. Its unique successor under Tukey order is the class of Polish but not locally compact spaces. For $M$ in this class, $\K(M)=_T\omega^\omega$ where $\omega^\omega$ is ordered by $f\leq g$ if $f(n)\leq g(n)$ for each $n\in \omega$.  Note that $\mathbb{P}$ is in this class, hence $\K(\mathbb{P})=_T\omega^\omega$. Gartside and Mamatelashvili in \cite{GM17} proved that $\omega^\omega\leq_T\K(\Q)\leq_T\omega^\omega\times [\omega_1]^{<\omega}$. This upper bound of $\K(\Q)$ is essential in our proof.

In Section~\ref{bair-cate}, we build a `Baire-Category' type of result in $2^{\omega_1}$ with the $G_\delta$-topology (see definition in Section~\ref{prel}). A natural question is whether we could write $2^{\omega_1}$ as a union of a $\mathfrak{c}$-sized collection of nowhere dense subsets in the $G_\delta$-topology. The answer is at least consistently `yes'. The reason is that under MA$(\omega_1)$, we have $|2^{\omega_1}|=\mathfrak{c}$. Hence under MA$(\omega_1)$, $2^{\omega_1}$ can be represented as a union of $\mathfrak{c}$-sized collection of singletons. Surprisingly, the answer is `no' if the collection of nowhere dense subsets in $G_\delta$-topology is ordered by some $\mathfrak{c}$-sized directed set. For example, Dow and Hart in \cite{DH16} showed that $2^{\omega_1}$ can't be written as a union of a $\Po$-directed collection of non-BIG compact subsets (see definition in Section~\ref{bair-cate}). The main result of Section~\ref{bair-cate} is that the same result holds with $\Po$ replaced by $\mathcal{K}(\Q)$ (Theorem~\ref{k(q)_d_b}).

Next, we will build some preliminary connections between Tukey order and $M$-domination.

\begin{lemma} Let $M$ and $N$ be two spaces such that $\mathcal{K}(M)\geq_T \mathcal{K}(N)$. Then any $N$-dominated space is $M$-dominated.
\end{lemma}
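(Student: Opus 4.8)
The plan is to transport the given $\mathcal{K}(N)$-directed compact cover of $X$ through a Tukey quotient map, obtaining a $\mathcal{K}(M)$-directed one. First I would fix the data supplied by the two hypotheses. Since $X$ is $N$-dominated, there is a compact cover $\{K_D : D\in\mathcal{K}(N)\}$ of $X$ with $K_D\subseteq K_{D'}$ whenever $D\subseteq D'$. Since $\mathcal{K}(M)\geq_T\mathcal{K}(N)$ and both directed sets are Dedekind complete (a bounded family $\mathcal{A}$ of compacta lies inside a single compact set, and the closed subset $\overline{\bigcup\mathcal{A}}$ then serves as its least upper bound), the reformulation of the Tukey quotient recorded in the introduction yields an order-preserving map $\phi:\mathcal{K}(M)\to\mathcal{K}(N)$ with $\phi(\mathcal{K}(M))$ cofinal in $\mathcal{K}(N)$.

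The construction is then to set $L_C:=K_{\phi(C)}$ for each $C\in\mathcal{K}(M)$, and to verify that $\{L_C:C\in\mathcal{K}(M)\}$ witnesses that $X$ is $M$-dominated. Compactness of each $L_C$ is immediate, since it is one of the original cover members. Monotonicity follows by composing two order-preserving steps: if $C\subseteq C'$ then $\phi(C)\subseteq\phi(C')$ because $\phi$ preserves order, whence $L_C=K_{\phi(C)}\subseteq K_{\phi(C')}=L_{C'}$ by monotonicity of the original cover.

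It remains to check the covering property, which is the one place the cofinality of $\phi(\mathcal{K}(M))$ is used and the only point I expect to require care. Given $x\in X$, choose $D\in\mathcal{K}(N)$ with $x\in K_D$; by cofinality there is $C\in\mathcal{K}(M)$ with $D\subseteq\phi(C)$, and then $x\in K_D\subseteq K_{\phi(C)}=L_C$. Hence $\bigcup_{C}L_C=X$, so $\{L_C\}$ is the desired $\mathcal{K}(M)$-directed compact cover. I do not anticipate a serious obstacle: once the definitions are unwound the argument is a direct composition, the only subtlety being to ensure the transporting map is genuinely order-preserving (rather than merely cofinal-to-cofinal), which is exactly what the Dedekind-complete reformulation guarantees. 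The same computation shows that if the original cover is additionally cofinal in $\mathcal{K}(X)$ then so is $\{L_C\}$, so the statement persists with `dominated' replaced by `strongly dominated'.
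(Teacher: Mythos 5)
Your proposal is correct and follows essentially the same route as the paper's proof: fix an order-preserving $\phi:\mathcal{K}(M)\to\mathcal{K}(N)$ with cofinal image, pull the cover back via $C\mapsto K_{\phi(C)}$, and use cofinality for the covering property. Your extra observations (justifying Dedekind completeness of $\mathcal{K}(M)$ and $\mathcal{K}(N)$, and the remark about strong domination) are sound additions but do not change the argument.
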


\begin{proof} Let $P=\K(M)$ and $Q=\K(N)$. Let $\phi$ be an order-preserving mapping from $P$ to $Q$ such that $\phi(P)$ is cofinal in $Q$. Suppose $X$ is $N$-dominated. Let $\{K_q:q\in Q\}$ be a compact cover of $X$ directed by $Q$. For each $p\in P$, we define $K_p=K_{\phi(p)}$. It is straightforward to see that $\{K_p:p\in P\}$ is $P$-directed. Take any $x\in X$. Then  $x\in K_q$ for some $q\in Q$. By the cofinality of $\phi(P)$, there is a $p\in P$ such that $q\leq \phi(p)$, i.e., $x\in K_{\phi(p)}$. Therefore, we obtain a $P$-directed compact cover of $X$, i.e. $X$ is $M$-dominated.   \end{proof}

We obtain the following corollary.

\begin{corollary}Let $M$ and $N$ be two spaces such that $\mathcal{K}(M)=_T \mathcal{K}(N)$. Then a space is $M$-dominated if and only if it is $N$-dominated.

\end{corollary}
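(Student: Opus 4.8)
The plan is to obtain the corollary directly from the preceding Lemma by invoking it in both directions. First I would unpack the hypothesis: by definition, $\mathcal{K}(M) =_T \mathcal{K}(N)$ means that $\mathcal{K}(M)$ and $\mathcal{K}(N)$ are mutually Tukey quotients, so both $\mathcal{K}(M) \geq_T \mathcal{K}(N)$ and $\mathcal{K}(N) \geq_T \mathcal{K}(M)$ hold. This is the only conceptual move, and it is built into the very definition of Tukey equivalence given in the introduction.

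Next I would apply the Lemma twice. Applying it to the relation $\mathcal{K}(M) \geq_T \mathcal{K}(N)$ shows that every $N$-dominated space is $M$-dominated. Exchanging the roles of $M$ and $N$ and applying it to $\mathcal{K}(N) \geq_T \mathcal{K}(M)$ shows that every $M$-dominated space is $N$-dominated. Combining these two implications yields the claimed equivalence: a space is $M$-dominated if and only if it is $N$-dominated, which is exactly the assertion of the corollary.

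I do not expect any genuine obstacle here, since the corollary is a purely formal consequence of the Lemma together with the symmetry encoded in the definition of $=_T$. The only step meriting a word of care is recognizing that Tukey equivalence splits into the two separate one-directional quotient relations, which is precisely what makes the Lemma applicable in each direction.
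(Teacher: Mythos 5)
Your proof is correct and is exactly the argument the paper intends: the corollary is stated without proof precisely because it follows from applying the preceding Lemma in both directions, using that $\mathcal{K}(M) =_T \mathcal{K}(N)$ unpacks into the two Tukey quotient relations $\mathcal{K}(M) \geq_T \mathcal{K}(N)$ and $\mathcal{K}(N) \geq_T \mathcal{K}(M)$.
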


Hence we could rephrase Dow and Hart's result in \cite{DH16} in the following way.
\begin{corollary} Let $M$ be any space such that $\K(M)\leq_T\omega^\omega$. Then a compact space $X$ is metrizable if it has an $M$-diagonal.
\end{corollary}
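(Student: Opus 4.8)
The plan is to reduce the statement to the already-established case of a $\mathbb{P}$-diagonal, using the Tukey machinery developed just above. The only real content is bookkeeping with the direction of the Tukey order, so I expect no genuine obstacle; the work is in stringing together the Lemma, the recalled identity $\mathcal{K}(\mathbb{P})=_T\omega^\omega$, and Dow and Hart's theorem.

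First I would unwind the hypothesis. The assumption $\mathcal{K}(M)\leq_T\omega^\omega$ means, by definition, $\omega^\omega\geq_T\mathcal{K}(M)$. Since it was recalled in the introduction that $\mathbb{P}$ is Polish but not locally compact and hence $\mathcal{K}(\mathbb{P})=_T\omega^\omega$, we may substitute to obtain $\mathcal{K}(\mathbb{P})\geq_T\mathcal{K}(M)$. (This substitution is exactly what the preceding Corollary licenses: Tukey-equivalent spaces of compacta yield the same domination class, so it is harmless to replace $\omega^\omega$ by $\mathcal{K}(\mathbb{P})$ throughout.)

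Next I would apply the first Lemma with its two spaces instantiated by $\mathbb{P}$ and $M$, in that order. Because $\mathcal{K}(\mathbb{P})\geq_T\mathcal{K}(M)$, the Lemma says that every $M$-dominated space is $\mathbb{P}$-dominated. Now suppose $X$ is a compact space with an $M$-diagonal; by definition this means $X^2\setminus\Delta$ is $M$-dominated. Applying the previous sentence to the space $X^2\setminus\Delta$, it is $\mathbb{P}$-dominated, i.e. $X$ has a $\mathbb{P}$-diagonal.

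Finally I would invoke Dow and Hart's theorem, cited in the introduction, that any compact space with a $\mathbb{P}$-diagonal is metrizable. Since $X$ is compact and has a $\mathbb{P}$-diagonal by the previous paragraph, $X$ is metrizable, completing the argument. The one point to state carefully is the orientation of the inequality in passing from $\mathcal{K}(M)\leq_T\omega^\omega$ to the hypothesis $\mathcal{K}(\mathbb{P})\geq_T\mathcal{K}(M)$ needed to feed the Lemma; everything else is a direct quotation of the stated results.
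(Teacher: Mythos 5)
Your proposal is correct and is essentially the paper's own (implicit) argument: the paper states this corollary as a direct rephrasing of Dow and Hart's theorem via the first Lemma, exactly as you do, passing from $\mathcal{K}(M)\leq_T\omega^\omega=_T\mathcal{K}(\mathbb{P})$ to ``every $M$-dominated space is $\mathbb{P}$-dominated'' and then quoting Dow--Hart. The only cosmetic quibble is that replacing $\omega^\omega$ by $\mathcal{K}(\mathbb{P})$ is justified by transitivity of the Tukey order rather than by the preceding Corollary (which concerns domination classes, not the substitution itself), but this does not affect the validity of the argument.
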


\begin{lemma} Let $X$ and $Y$ be two spaces such that $X$ can be continuously mapped onto $Y$. If $X$ is $M$-dominated for some space $M$, then $Y$ is $M$-dominated.
\end{lemma}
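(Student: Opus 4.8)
The plan is to push the witnessing compact cover of $X$ forward along the surjection. Write $f\colon X\to Y$ for the given continuous surjection and set $P=\mathcal{K}(M)$. By hypothesis there is a $P$-directed compact cover $\{K_p:p\in P\}$ of $X$; recall this means each $K_p$ is compact, $K_p\subseteq K_{p'}$ whenever $p\leq p'$, and $\bigcup_{p\in P}K_p=X$. For each $p\in P$ I would define $L_p=f(K_p)$ and claim that $\{L_p:p\in P\}$ is a $P$-directed compact cover of $Y$, which is precisely what it means for $Y$ to be $M$-dominated.

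There are three things to verify, all routine. First, each $L_p$ is compact, being the continuous image of the compact set $K_p$. Second, the family is $P$-directed: if $p\leq p'$ then $K_p\subseteq K_{p'}$, and forward images preserve inclusions, so $L_p=f(K_p)\subseteq f(K_{p'})=L_{p'}$. Third, the family covers $Y$: given $y\in Y$, surjectivity of $f$ yields some $x\in X$ with $f(x)=y$, and then $x\in K_p$ for some $p\in P$, whence $y=f(x)\in f(K_p)=L_p$. Assembling these three points gives the desired $\mathcal{K}(M)$-directed compact cover of $Y$.

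I do not expect any genuine obstacle here; the only point worth flagging is that the statement concerns plain $M$-domination rather than strong $M$-domination. For the strong version one would additionally need $\{L_p:p\in P\}$ to be cofinal in $\mathcal{K}(Y)$, which need not follow from the forward image alone, since a compact subset of $Y$ need not lie inside the image of a single $K_p$. But as cofinality is not required by the definition of $M$-dominated used here, the simple pushforward $L_p=f(K_p)$ suffices to complete the argument.
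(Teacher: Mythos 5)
Your proof is correct and is essentially the paper's own argument: both push the $\mathcal{K}(M)$-directed compact cover forward along the continuous surjection and note that compactness, directedness, and covering are preserved. Your extra remark about strong domination (cofinality not being preserved by forward images) is accurate but not needed for the statement as given.
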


\begin{proof} Let $f$ be a continuous mapping from $X$ onto $Y$. Assume that $\{K_F:F\in \K(M)\}$ is a compact cover of $X$ ordered by $\K(M)$. Then the collection $\{f(K_F):F\in \K(M)\}$ is a compact cover of $Y$ ordered by $\K(M)$, i.e. $Y$ is $M$-dominated.
\end{proof}



\section{Some preliminaries}\label{prel}

Let $P$ be a directed set. A subset $C$ of $P$ is \emph{cofinal} in $P$ if for any $p\in P$, there exists a $q\in C$ such that $p\leq q$. Then $\text{cof}(P)=\min\{|C|: C\text{ is cofinal in }P\}$. We also define $\text{add}(P)=\min\{|Q|: Q \text{ is unbounded in }P\}$. Let $\kappa\geq \mu\geq \lambda$ be cardinals. We say that $P$ has \emph{calibre~$(\kappa, \mu, \lambda)$} if for every $\kappa$-sized subset $S$ of $P$ there is a $\mu$-sized subset $S_0$ such that every $\lambda$-sized subset of $S_0$ has an upper bound in $P$. We write calibre~$(\kappa, \mu, \mu)$ as calibre~$(\kappa, \mu)$ and calibre~$(\kappa, \kappa, \kappa)$ as calibre~$\kappa$. If $\mathcal{C}=\{K_p: p\in P\}$ is a $P$-directed collection of subsets of $X$ and $Q$ is a subset of $P$, we denote $K[Q]=\bigcup\{K_q: q\in Q\}$.

Since $\mathbb{P}$ is homeomorphic to $\omega^\omega$, we identify $\mathbb{P}$ with $\omega^\omega$. For any $f, g\in \omega^\omega$, we say that $f\leq^\ast g$ if the set $\{n\in\omega:f(n)>g(n) \}$ is finite. Then $\mathfrak{b}=\text{add}(\omega^\omega, \leq^\ast)$ and $\mathfrak{d}=\text{cof}(\omega^\omega, \leq^\ast)$. See \cite{D84} for more information about small cardinals. For any $f\in \omega^\omega$ and $n\in \omega$, let $f{\upharpoonright}_n$ denote the restriction of $f$ on $n$. Also, let $[f\supha_n]=\{g\in \omega^\omega: f\supha_n \subset g\}$.

For any subset $I$ of $\omega_1$, let $\text{Fn}(I, 2)$ be the set of finite partial functions from $I$ to $2$, and $2^{<\omega_1}$ the binary tree of countable sequences of zeros and ones. For any $s\in \text{Fn}(\omega_1, 2)$, $[s]=\{x\in 2^{\omega_1}: s\subset x\}$; similarly, for any node $\rho \in 2^{<\omega_1}$, $[\rho]=\{x\in 2^{\omega_1}: \rho\subset x\}$. Then $\{[s]: s\in \text{Fn}(\omega_1, 2)\}$ is the standard base for the product topology on $2^{\omega_1}$ and $\{[\rho]: \rho\in 2^{<\omega_1}\}$ is the standard base for the $G_\delta$-topology on $2^{\omega_1}$. For any two elements $\rho$, $\rho'\in 2^{<\omega_1}$, we say $\rho'$ is an extension of $\rho$ if $\rho\subseteq \rho'$. We say that $\rho'$ is a finite (countable) extension of $\rho$ if $\rho'\setminus \rho$ is finite (respectively, countable).

We say that a space $X$ is $\omega$-bounded if the closure of any countable subset of $X$ is compact; $X$ is cosmic if it has a countable network.

\section{Properties (T) and (wT)}\label{t_wt}

 Dow and Hart in \cite{DH16} introduced a very useful property of subsets of $2^{\omega_1}$.  A subset $Y$ of $2^{\omega_1}$ is said to be \emph{BIG}  if it is compact and projects onto some final product, i.e., there is a $\delta\in \omega_1$ such that $\pi_{\delta}[Y]=2^{\omega_1\setminus \delta}$; here $\pi_\delta[Y]=\{y\supha_{\omega_1\setminus \delta}: y\in Y  \}$. This condition could also expressed as follows: there is a $\delta\in \omega_1$ such that for every $s\in \text{Fn}(\omega_1\setminus \delta, 2)$, the intersection $Y\cap [s]$ is nonempty. We say a subset of $2^{\omega_1}$ is non-BIG if it is not BIG.

It is straightforward to see that if $Y$ is a compact subset of $2^{\omega_1}$ and $[\rho]\subset Y$ for some node $\rho\in 2^{<\omega_1}$, then $Y$ is BIG witnessed by the supremum of $\text{Dom}(\rho)$. The following lemma is proved by Dow and Hart in \cite{DH16}.

\begin{lemma}\label{b-base} If $Y$ is BIG, then there is a node $\rho$ in the tree $2^{<\omega_1}$ such that $[\rho]\subseteq Y$.
\end{lemma}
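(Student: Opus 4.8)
The plan is to build the node $\rho$ by a transfinite recursion that fixes one coordinate at a time while preserving BIGness. The first thing I would record is a reformulation of the goal. If $\rho$ is a node whose domain is a countable ordinal $\gamma$, then $[\rho]=\{x:x\supha_\gamma=\rho\}$, and since $Y$ is compact (so $Y=\bigcap_F\pi_F^{-1}\pi_F(Y)$ over finite $F$) one checks that $[\rho]\subseteq Y$ holds \emph{exactly} when, for every $s\in\text{Fn}(\omega_1\setminus\gamma,2)$, the set $Y\cap[\rho]\cap[s]$ is nonempty; in the language of this section this says precisely that the compact set $Y\cap[\rho]$ is BIG with witness $\gamma$. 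So it suffices to produce a countable $\gamma$ and a $\rho\in 2^\gamma$ for which $Y\cap[\rho\supha_\gamma]$ is BIG with witness exactly $\gamma$. I would construct $\rho$ together with a decreasing chain of compacta $Y_\beta:=Y\cap[\rho\supha_\beta]$ ($\beta<\omega_1$, $Y_0=Y$), maintaining at every stage that $Y_\beta$ is BIG, and I would let $w_\beta$ be the least witness for $Y_\beta$.

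Two lemmas drive the recursion, and the first is where I expect the real work to be. \emph{(i) Single-coordinate splitting:} if $Z$ is BIG and $\beta<\omega_1$, then $Z\cap\{x(\beta)=i\}$ is BIG for at least one $i\in\{0,1\}$; this makes the successor step go through, by setting $\rho(\beta)=i$ and $Y_{\beta+1}=Y_\beta\cap\{x(\beta)=i\}$. I would prove (i) by contradiction: if both halves failed to be BIG, pick a witness $\delta'>\beta$ above the witness of $Z$; non-BIGness of the half $x(\beta)=0$ yields an $s_0\in\text{Fn}(\omega_1\setminus\delta',2)$ with $Z\cap\{x(\beta)=0\}\cap[s_0]=\emptyset$. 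Then every point of $Z$ in $[s_0]$ has $x(\beta)=1$, so $Z\cap[s_0]$ lies in the half $x(\beta)=1$; but $Z\cap[s_0]$ is itself BIG (for every finite $r$ on $\omega_1\setminus(\sup\text{Dom}(s_0)+1)$, $Z\cap[s_0\cup r]\neq\emptyset$ by BIGness of $Z$), contradicting that that half is not BIG. \emph{(ii) Limit preservation:} at a limit $\lambda$ the set $Y_\lambda=\bigcap_{\beta<\lambda}Y_\beta$ is BIG with witness $W_\lambda:=\sup_{\beta<\lambda}w_\beta$. This is the easy step: for any $s\in\text{Fn}(\omega_1\setminus W_\lambda,2)$ each $Y_\beta$ meets $[s]$ (since $w_\beta\le W_\lambda$), so the decreasing sequence of nonempty compacta $Y_\beta\cap[s]$ has nonempty intersection $Y_\lambda\cap[s]$.

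It remains to locate the stopping ordinal. First note $w_\beta\ge\beta$ for every $\beta$: the nonempty set $Y_\beta\subseteq[\rho\supha_\beta]$ has all coordinates below $\beta$ frozen, so if $w_\beta<\beta$ we could take $\eta\in[w_\beta,\beta)$ and $s=\{(\eta,1-\rho(\eta))\}\in\text{Fn}(\omega_1\setminus w_\beta,2)$, forcing $Y_\beta\cap[s]=\emptyset$ and contradicting BIGness. Now $\beta\mapsto w_\beta$ is a function from $\omega_1$ to $\omega_1$, so its set of closure points $\{\lambda: w_\beta<\lambda\text{ for all }\beta<\lambda\}$ is club; fix a countable limit ordinal $\lambda$ in it. Then $\lambda\le W_\lambda=\sup_{\beta<\lambda}w_\beta\le\lambda$, whence $W_\lambda=\lambda$, and by (ii) the set $Y\cap[\rho\supha_\lambda]$ is BIG with witness $\lambda$. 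By the reformulation of the first paragraph this is exactly $[\rho\supha_\lambda]\subseteq Y$, so $\rho\supha_\lambda$ is the desired node. The main obstacle is Lemma (i): fixing a single coordinate below the current witness can genuinely force the witness upward (for instance when $Y$ ``couples'' a low coordinate to a higher one), and it is precisely the closure-point argument of the last paragraph that absorbs this upward drift at a countable stage.
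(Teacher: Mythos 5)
Your proof is correct, but there is nothing in this paper to compare it against: the paper does not prove Lemma~\ref{b-base}, it quotes the result from Dow and Hart \cite{DH16}. Taken on its own terms, your argument is complete. The reformulation in your first paragraph is valid: since $Y$ is closed in $2^{\omega_1}$, $[\rho]\subseteq Y$ holds if and only if $Y\cap[\rho]$ meets $[s]$ for every $s\in\text{Fn}(\omega_1\setminus\text{Dom}(\rho),2)$, i.e., if and only if $Y\cap[\rho]$ is BIG witnessed by $\text{Dom}(\rho)$. Your splitting lemma (i) is the real content and is proved correctly: if the half $\{x:x(\beta)=0\}$ fails to be BIG, then for a suitable $s_0$ lying above both $\beta$ and a witness for $Z$, the set $Z\cap[s_0]$ is BIG (witnessed by $\sup\text{Dom}(s_0)+1$) and is contained in the other half, and BIGness passes to compact supersets. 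The limit step (ii) is the finite-intersection-property argument that the paper records separately as Lemma~\ref{ctbl_insctn_big}, and the closure-point (reflection) step legitimately terminates the recursion at a countable ordinal. One simplification you could make: since a witness for BIGness can always be enlarged (if $\delta$ works, so does any $\delta'\geq\delta$), the inequality $w_\beta\geq\beta$ is never needed; at any closure point $\lambda$ of $\beta\mapsto w_\beta$ you already have $W_\lambda\leq\lambda$, hence $Y\cap[\rho\supha_\lambda]$ is BIG witnessed by $\lambda$ itself, which is all your reformulation requires. Your closing remark is also apt: the genuine difficulty is that fixing a single coordinate can push the least witness upward, and it is precisely the closure-point argument (the same device as intersecting with a countable elementary submodel, which is how such arguments are often packaged) that absorbs this drift at a countable stage.
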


 Thus a BIG subset of $2^{\omega_1}$ has a non-empty interior in the $G_\delta$-topology. Hence if a subset of $2^{\omega_1}$ is nowhere dense in the $G_\delta$-topology, then it is non-BIG.

Under some conditions, the property of being BIG is preserved by taking countable intersections. The following lemma will be useful.

\begin{lemma}\label{ctbl_insctn_big} Suppose $\alpha\in\omega_1$. Let $\mathcal{B}=\{B_\beta: \beta<\alpha\}$ be a countable decreasing collection of BIG subsets of $2^{\omega_1}$ where the BIGness of $B_\beta$ is witnessed by $\delta_\beta$ for each $\beta<\alpha$.

Then $\bigcap \mathcal{B}$ is BIG witnessed by $\gamma=\sup\{\delta_\beta: \beta<\alpha\}$.
\end{lemma}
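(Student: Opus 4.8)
The plan is to verify the second, combinatorial formulation of BIGness directly for $\bigcap\mathcal{B}$ with the proposed witness $\gamma=\sup\{\delta_\beta:\beta<\alpha\}$. First I would check that $\gamma$ is even an admissible witness: since $\alpha$ is countable and each $\delta_\beta\in\omega_1$, the ordinal $\gamma$ is a supremum of countably many countable ordinals, so $\gamma\in\omega_1$. Next, $\bigcap\mathcal{B}$ is compact, being an intersection of compact (hence closed) subsets of the compact space $2^{\omega_1}$, and therefore a closed subset of it. Thus it remains only to produce, for every $s\in\text{Fn}(\omega_1\setminus\gamma,2)$, a point of $\bigcap\mathcal{B}$ lying in $[s]$.

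So fix $s\in\text{Fn}(\omega_1\setminus\gamma,2)$. The key observation is the inequality $\gamma\geq\delta_\beta$ for every $\beta<\alpha$, which translates into the inclusion $\text{Dom}(s)\subseteq\omega_1\setminus\gamma\subseteq\omega_1\setminus\delta_\beta$, so that $s\in\text{Fn}(\omega_1\setminus\delta_\beta,2)$ for each $\beta$. Applying the BIGness of $B_\beta$, witnessed by $\delta_\beta$, to this very $s$ then gives $B_\beta\cap[s]\neq\emptyset$ for every $\beta<\alpha$.

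Finally I would invoke compactness. Each $B_\beta\cap[s]$ is compact (an intersection of the compact set $B_\beta$ with the closed set $[s]$), nonempty by the previous step, and the family $\{B_\beta\cap[s]:\beta<\alpha\}$ is decreasing because $\mathcal{B}$ is. A nested family of nonempty compact subsets of the Hausdorff space $2^{\omega_1}$ has the finite intersection property, so $\bigcap_{\beta<\alpha}(B_\beta\cap[s])=(\bigcap\mathcal{B})\cap[s]$ is nonempty. Since this holds for all such $s$, the set $\bigcap\mathcal{B}$ is BIG witnessed by $\gamma$. The argument is short; the one point that genuinely delivers the conclusion is this finite-intersection/compactness step, while the countability of $\alpha$ is used solely to keep $\gamma$ below $\omega_1$, and everything else is simply translating the witness inequality into an inclusion of domains.
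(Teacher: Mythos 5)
Your proof is correct and is essentially the paper's argument: both reduce BIGness of $\bigcap\mathcal{B}$ to showing $[s]\cap\bigcap\mathcal{B}\neq\emptyset$ for each $s\in\text{Fn}(\omega_1\setminus\gamma,2)$, observe that each $[s]\cap B_\beta$ is nonempty because $\delta_\beta\leq\gamma$, and conclude via the nested nonempty compact sets having nonempty intersection. The only cosmetic differences are that you handle successor and limit $\alpha$ uniformly (the paper dismisses the successor case as trivial) and you explicitly check $\gamma\in\omega_1$, which the paper leaves implicit.
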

\begin{proof} It is clear that $\bigcap \mathcal{B}$ is compact. If $\alpha$ is a successor ordinal, the result trivially holds.

Now assume that $\alpha$ is a limit ordinal in $\omega_1$. We show that for any $s\in\text{Fn}(\omega_1\setminus \gamma, 2)$, $[s]\cap \bigcap\mathcal{B}\neq \emptyset$. Take an arbitrary $s\in\text{Fn}(\omega_1\setminus \gamma, 2)$. Since $\gamma=\sup\{\delta_\beta: \beta<\alpha\}$,  $[s]\cap B_\beta\neq\emptyset$ for each $\beta<\alpha$. Then $\{[s]\cap B_\beta: \beta<\alpha\}$ is a decreasing sequence of compact subsets of $2^{\omega_1}$. Therefore, $\bigcap\{[s]\cap B_\beta: \beta<\alpha\}$ is non-empty, i.e. $[s]\cap \bigcap\{B_\beta: \beta<\alpha \}$ is non-empty. Hence $\bigcap \mathcal{B}$ is BIG witnessed by $\gamma$.  \end{proof}


We start by introducing some new classes of `Tiny' subsets and `Fat' subsets of  $2^{\omega_1}$.

\begin{definition} Let $A$ be a subset of $2^{\omega_1}$. We say that $A$ has:
\begin{itemize}

\item[]\emph{Property (T)} if for any node $\rho\in 2^{<\omega_1}$, there is a \emph{finite} extension $\hat{\rho}\in 2^{<\omega_1}$ of $\rho$ such that $[\hat{\rho}]\cap A=\emptyset$.

\item[]\emph{Property (wT)} if for any node $\rho\in 2^{<\omega_1}$, there is an extension $\hat{\rho}\in 2^{<\omega_1}$ of $\rho$ such that $[\hat{\rho}]\cap A=\emptyset$.

\item[]\emph{Property (F)} if there exists a node $\rho\in 2^{<\omega_1}$ such that $[\rho]\subseteq  A$.

\end{itemize}
\end{definition}

By the definition, we see that any subset of $2^{\omega_1}$ with property (T) has property (wT). We say that a subset of $2^{\omega_1}$ has property \emph{non-(T)} (respectively, \emph{non-(wT)}) if it doesn't have property (T)(respectively, (wT)).  We see that  a subset $B$ of $2^{\omega_1}$ has property non-(T) (property non-(wT)) if there exists a node $\rho\in 2^{<\omega_1}$ such that $[\hat{\rho}]\cap B \neq \emptyset$ for any finite (respectively, countable) extension $\hat{\rho}$ of $\rho$. Also, it is straightforward to see that properties (T) and (wT) are hereditary.

Next, we investigate the relation between non-BIGness and property (T). Also, we prove that for any subset of $2^{\omega _1}$ which is nowhere dense in the $G_\delta$-topology has property (wT).

\begin{lemma}\label{nb-t} Let $A$ be a compact subset of $2^{\omega_1}$. Then $A$ is non-BIG if and only if it has property (T).\end{lemma}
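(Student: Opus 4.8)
The plan is to prove the two implications separately and, in each case, to pass to the contrapositive so that the hypothesis immediately supplies a concrete node to work with.

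For \emph{property (T)} $\Rightarrow$ \emph{non-BIG} I would argue the contrapositive: if $A$ is BIG, then $A$ fails (T). Since $A$ is compact and BIG, Lemma~\ref{b-base} yields a node $\rho\in 2^{<\omega_1}$ with $[\rho]\subseteq A$. For every finite extension $\hat\rho$ of $\rho$ we then have $[\hat\rho]\subseteq[\rho]\subseteq A$, and $[\hat\rho]$ is non-empty because $\hat\rho$ is a node; hence $[\hat\rho]\cap A=[\hat\rho]\neq\emptyset$. Thus $\rho$ witnesses that $A$ has property non-(T). This direction is routine and uses only Lemma~\ref{b-base}.

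The substantive direction is \emph{non-BIG} $\Rightarrow$ \emph{(T)}, which I again phrase contrapositively as \emph{non-(T)} $\Rightarrow$ \emph{BIG}. So assume $A$ is non-(T) and fix a witnessing node $\rho$, with $\mathrm{Dom}(\rho)=\eta$, so that $[\hat\rho]\cap A\neq\emptyset$ for every finite extension $\hat\rho$ of $\rho$. The aim is to produce a node $\sigma\supseteq\rho$ with $[\sigma]\subseteq A$; once we have this, the remark preceding Lemma~\ref{b-base} shows that $A$ is BIG, witnessed by $\mathrm{Dom}(\sigma)$. A first, purely finitary observation is that the hypothesis already forces ``local fatness'' one $\omega$-block past $\eta$: the finite extensions $t$ of $\rho$ for which $A\cap[\rho\cup t]\neq\emptyset$ form a subtree of $2^{<\omega}$ that is in fact all of $2^{<\omega}$, so, since a decreasing sequence of non-empty compact subsets of $A$ has non-empty intersection, \emph{every} node $\sigma$ of domain $\eta+\omega$ extending $\rho$ already satisfies $A\cap[\sigma]\neq\emptyset$. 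I would then attempt to iterate this: build an increasing chain $\rho=\sigma_0\subseteq\sigma_1\subseteq\cdots$ of nodes, lengthening the domain by an $\omega$-block at each successor step while maintaining the invariant that the trace $A\cap[\sigma_\xi]$ is BIG relative to $[\sigma_\xi]$, and taking unions at limit stages. At a limit stage the invariant would be preserved by Lemma~\ref{ctbl_insctn_big}, the relevant intersection being a countable decreasing intersection of BIG sets, and the union node $\sigma=\bigcup_\xi\sigma_\xi$ would satisfy $[\sigma]\subseteq A$ by compactness.

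The step I expect to be the genuine obstacle is exactly the passage through limit ordinals in the domain. The non-(T) hypothesis controls only \emph{finitely} many coordinates beyond $\eta$ at a time, and this control need not survive to the first limit level above $\eta$: a single fresh coordinate sitting at a limit ordinal can already eject a point from $A$, so the naive recursion threatens to stall there. The crux is therefore to upgrade the finitary information ``every finite extension meets $A$'' into honest BIGness of the traces $A\cap[\sigma_\xi]$ that is stable under the countable intersections occurring at limit stages; this is where the compactness of $A$ together with Lemma~\ref{ctbl_insctn_big} must do the decisive work, and ensuring that \emph{finite} (rather than merely countable) extensions suffice is the delicate point on which the equivalence turns.
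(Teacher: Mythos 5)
Your first direction (BIG implies non-(T), via Lemma~\ref{b-base}) is correct and is exactly the paper's argument. The genuine problem is the substantive direction, which you leave as a sketch with an admitted unresolved obstacle --- and that obstacle is not a removable technicality: under the reading of ``finite extension'' your sketch uses, the implication non-(T) $\Rightarrow$ BIG is simply false. You treat the finite extensions of $\rho$ (with $\text{Dom}(\rho)=\eta$) as the nodes obtained by appending a finite binary string contiguously after $\eta$; this is visible in your claim that the relevant extensions ``form a subtree of $2^{<\omega}$'' and in your $\omega$-block recursion. Take $A=\{x\in 2^{\omega_1}: x(\alpha)=0\text{ for all }\alpha\geq\omega\}$ and $\rho=\emptyset$. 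Then $A$ is compact, and every contiguous finite extension $\hat{\rho}$ of $\rho$ satisfies $[\hat{\rho}]\cap A\neq\emptyset$ (extend the finite string by zeros), so $A$ is non-(T) in your sense; but $A$ is non-BIG, since for any $\delta\in\omega_1$ the set $A$ misses the cylinder $[s]$ where $s\in\text{Fn}(\omega_1\setminus\delta,2)$ takes value $1$ at a single coordinate $\gamma\geq\max(\delta,\omega)$. In particular your recursion cannot even start: the invariant ``$A\cap[\sigma_0]$ is BIG relative to $[\sigma_0]$'' is exactly what fails at stage $0$, because meeting all contiguous-block cylinders gives no control over coordinates at or beyond $\eta+\omega$.

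What saves the lemma is that in this paper ``finite extension of $\rho$'' must be read as $\rho\cup s$ with $s\in\text{Fn}(\omega_1\setminus\text{Dom}(\rho),2)$: finitely many new coordinates placed \emph{anywhere} above $\text{Dom}(\rho)$. This is how non-(T) is unwound in the proofs of Lemma~\ref{ctbl_insctn_nwt_nt} and Lemma~\ref{int_base_t} (``for any $s\in \text{Fn}(\omega_1\setminus \text{Dom}(\rho), 2)$, $([\rho]\cap [s])\cap G\neq \emptyset$''), and the sets $[\rho\cup s]$ are precisely the basic neighborhoods of the product topology inside $[\rho]$. Under that reading your hard direction collapses to one line and needs none of your machinery: if non-(T) is witnessed by $\rho$, then for every $s\in\text{Fn}(\omega_1\setminus\eta,2)$ we have $\emptyset\neq A\cap[\rho\cup s]\subseteq A\cap[s]$, which (as $A$ is compact) is exactly the definition of BIG witnessed by $\eta$; no transfinite recursion, no Lemma~\ref{ctbl_insctn_big}, and no need to produce a cylinder inside $A$ --- that is the content of Lemma~\ref{b-base}, which is only needed for the direction you already handled. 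The paper proves this same direction contrapositively and just as briefly: given a node $\rho$, non-BIGness forces $[\rho]\setminus A\neq\emptyset$ (otherwise $[\rho]\subseteq A$ would make $A$ BIG), the set $A\cap[\rho]$ is closed in $[\rho]$ by compactness, so some basic neighborhood $[\rho\cup s]$ of a point of $[\rho]\setminus A$ is disjoint from $A$. You should adopt this reading of the definition and replace the entire second half of your proposal accordingly.
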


\begin{proof} First, assume that $A$ is non-BIG. Take a node $\rho\in 2^{<\omega_1}$. Since $[\rho]$ is compact, so is $[\rho]\cap A$. Hence it is closed in $[\rho]$. Since $A$ is non-BIG, $[\rho]\setminus A\neq \emptyset$. Pick $x\in [\rho]\setminus A$. Then since $[\rho]\setminus A$ is open in $[\rho]$, there exists a finite extension $\hat{\rho}$ of $\rho$ such that $x\in [\hat{\rho}]$ and $[\hat{\rho}]\cap A=\emptyset$. Hence $A$ has property (T).

Now, assume that $A$ is BIG. By Lemma~\ref{b-base}, there is a node $\rho_0$ in $2^{<\omega_1}$ such that $[\rho_0]\subseteq A$. Hence any finite extension $\hat{\rho_0}$ of $\rho_0$ satisfies that $[\hat{\rho_0}]\subseteq A$. Therefore, $A$ has property non-(T).   \end{proof}

\begin{lemma}\label{wt-nwd} Let $A$ be a subset of $2^{\omega_1}$. Then $A$ has property (wT) if  it is nowhere dense in the $G_\delta$-topology.\end{lemma}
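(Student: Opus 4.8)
The plan is to translate the hypothesis of nowhere density directly into the tree-theoretic language of property~(wT), so that the proof is essentially a reformulation of definitions. Recall that the basic open sets of the $G_\delta$-topology are exactly the sets $[\rho]$ with $\rho \in 2^{<\omega_1}$, and that a set is nowhere dense precisely when the complement of its closure is dense; equivalently, every nonempty $G_\delta$-open set contains a nonempty $G_\delta$-open subset that misses $A$. I would work entirely with the basic sets $[\rho]$ throughout.

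First I would fix an arbitrary node $\rho \in 2^{<\omega_1}$ and consider $[\rho]$, which is nonempty because any countable binary sequence extends to a point of $2^{\omega_1}$. Since $A$ is nowhere dense, its closure $\overline{A}$ in the $G_\delta$-topology has empty interior, so $[\rho]$ cannot be contained in $\overline{A}$; hence $[\rho]\setminus \overline{A}$ is a nonempty open set. Choosing a point $x\in [\rho]\setminus\overline{A}$ and a basic neighborhood $[\hat{\rho}]$ of $x$ with $[\hat{\rho}]\subseteq [\rho]\setminus\overline{A}$, I obtain a node $\hat{\rho}$ with $[\hat{\rho}]\cap A\subseteq [\hat{\rho}]\cap\overline{A}=\emptyset$.

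The one point requiring a little care is ensuring that $\hat{\rho}$ may be taken to be a genuine \emph{extension} of $\rho$, as property~(wT) demands, rather than merely a comparable node. Since $x$ lies in both $[\rho]$ and $[\hat{\rho}]$, the nodes $\rho$ and $\hat{\rho}$ are both initial segments of $x$ and are therefore comparable. If $\rho\subseteq\hat{\rho}$ we are done at once; if instead $\hat{\rho}\subsetneq\rho$, then $[\rho]\subseteq[\hat{\rho}]$, so already $[\rho]\cap A=\emptyset$ and I may take $\rho$ itself as the required extension. In either case I produce an extension $\hat{\rho}$ of $\rho$ with $[\hat{\rho}]\cap A=\emptyset$, which is exactly property~(wT). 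I expect no real obstacle here: the entire content is the standard reformulation of nowhere density, and the only subtlety is the bookkeeping above that upgrades an arbitrary comparable node to an extension of the given node.
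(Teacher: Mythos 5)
Your proof is correct and follows essentially the same route as the paper's: fix a node $\rho$, use that the $G_\delta$-closure of $A$ has empty interior to pick $x\in[\rho]$ outside that closure, and shrink to a basic set $[\hat{\rho}]$ missing the closure. Your comparability bookkeeping is harmless but unnecessary: since $x\in[\rho]\cap[\hat{\rho}]$ both nodes are initial segments of $x$, and one may simply take the basic neighborhood to be given by a (countable) initial segment of $x$ extending $\rho$, which is exactly what the paper does.
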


\begin{proof}  Let $C$ be the closure of $A$ in the $G_\delta$-topology.  Take a node $\rho\in 2^{<\omega_1}$. Then $[\rho]\setminus C\neq \emptyset$. Pick $x\in [\rho]$ such that $x\notin C$. Since $C$ is also closed in $[\rho]$ with $G_\delta$-topology, there exists a countable extension $\hat{\rho}$ of $\rho$ such that $\hat{\rho}\subset x$ and  $[\hat{\rho}]\cap C =\emptyset$. Then $C$ has property (wT), hence so does $A$ . \end{proof}

Next, we show that property (wT) is preserved by taking countable union and under some nice condition, the countable intersection of subsets with property non-(wT) has property non-(T).
\begin{lemma}\label{ctbl_u_wt_wt} Let $\{A_n: n\in \omega\}$ be a countable collection of subsets in $2^{\omega_1}$ with property (wT). Then $\bigcup \{A_n: n\in \omega\}$ has property (wT).
\end{lemma}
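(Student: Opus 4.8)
The plan is to run a standard fusion (diagonalization) argument through the countable list, producing a single countable node whose basic $G_\delta$-open set avoids every $A_n$ simultaneously. Fix an arbitrary node $\rho\in 2^{<\omega_1}$; the goal is to build an extension $\hat\rho$ of $\rho$ with $[\hat\rho]\cap A_n=\emptyset$ for every $n\in\omega$, which gives $[\hat\rho]\cap\bigcup_n A_n=\emptyset$ and hence establishes property (wT) for the union.

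First I would construct recursively an increasing chain of nodes $\rho=\rho_0\subseteq\rho_1\subseteq\rho_2\subseteq\cdots$ in $2^{<\omega_1}$. At stage $n$, having produced $\rho_n$, apply property (wT) of $A_n$ to the node $\rho_n$: this yields an extension $\rho_{n+1}\in 2^{<\omega_1}$ of $\rho_n$ with $[\rho_{n+1}]\cap A_n=\emptyset$. Each $\rho_n$ is a genuine node, so the recursion is well defined. Then set $\hat\rho=\bigcup_{n\in\omega}\rho_n$.

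The one point that needs checking — and the only place where anything could go wrong — is that $\hat\rho$ is again a node of $2^{<\omega_1}$, i.e. a \emph{countable} binary sequence. Here I would note that each $\rho_n$ has domain a countable ordinal $\alpha_n$, that the $\alpha_n$ are nondecreasing, and that $\hat\rho$ has domain $\sup_n\alpha_n$. Since a supremum of countably many countable ordinals is countable ($\omega_1$ is regular), $\hat\rho$ has countable domain and is therefore a legitimate element of $2^{<\omega_1}$. This is the substantive step; everything else is bookkeeping.

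Finally I would verify the avoidance. For each fixed $n$ we have $\rho_{n+1}\subseteq\hat\rho$, so $[\hat\rho]\subseteq[\rho_{n+1}]$, whence $[\hat\rho]\cap A_n\subseteq[\rho_{n+1}]\cap A_n=\emptyset$. As $n$ was arbitrary, $[\hat\rho]\cap\bigcup_{n\in\omega}A_n=\emptyset$, and since $\rho$ was arbitrary this shows $\bigcup_{n\in\omega}A_n$ has property (wT). I expect no real obstacle beyond the countability verification; the argument is essentially the observation that property (wT) permits countable extensions, so countably many demands can be met one after another and amalgamated in the limit.
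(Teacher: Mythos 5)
Your proof is correct and is essentially identical to the paper's own argument: both run the same recursion, extending the node at stage $n$ to kill $A_n$ via property (wT), and take the union of the resulting increasing chain. Your explicit check that the limit node has countable domain (using regularity of $\omega_1$) is a detail the paper leaves implicit, but it is the same proof.
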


\begin{proof} Take a node $\rho\in 2^{<\omega_1}$. Inductively, we can build inductively a sequence $\{\gamma_n: n\in \omega\}$ in $2^{<\omega_1}$ such that: i) $\gamma_0$ extends $\rho$ with $[\gamma_0]\cap A_0=\emptyset$; ii) $\gamma_n$ extends $\gamma_{n-1}$ with $[\gamma_n]\cap A_n=\emptyset$. Let $\hat{\rho}=\bigcup\{\gamma_n: n\in \omega\}$. It is clear that $[\hat{\rho}]\cap (\bigcup \{A_n: n\in \omega\})=\emptyset $, hence $\bigcup \{A_n: n\in \omega\}$ has property (wT). \end{proof}

\begin{lemma}\label{ctbl_insctn_nwt_nt} Let $\{B_n: n\in \omega\}$ be a decreasing collection of subsets in $2^{\omega_1}$ with property non-(wT). If every sequence $\{x_n: x_n\in B_n\}$ has a cluster point in  $\bigcap\{B_n: n\in \omega\}$, then $\bigcap \{B_n: n\in \omega\}$ has property non-(T). 
\end{lemma}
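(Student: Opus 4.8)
The plan is to produce a single node $\rho\in 2^{<\omega_1}$ witnessing property non-(T) for $\bigcap_n B_n$, by first locating a cylinder $[\rho]$ lying inside every product-closure $\overline{B_n}$ and then using the cluster-point hypothesis to promote this to genuine meetings with the intersection itself. The reduction I would isolate at the outset is: it suffices to find a node $\rho$ such that for every finite extension $\hat\rho$ of $\rho$ and every $n$ one has $[\hat\rho]\cap B_n\neq\emptyset$. Indeed, fixing such a $\hat\rho$ and choosing $x_n\in[\hat\rho]\cap B_n$, the hypothesis hands us a cluster point $x\in\bigcap_n B_n$ of the sequence $(x_n)$; since $\hat\rho$ has countable domain the cylinder $[\hat\rho]$ is closed in the product topology and contains every $x_n$, so $x\in[\hat\rho]$ and hence $[\hat\rho]\cap\bigcap_n B_n\neq\emptyset$. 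As $\hat\rho$ ranges over all finite extensions of $\rho$, this is exactly property non-(T) for $\bigcap_n B_n$.

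To find $\rho$, I would pass to the product-closures $C_n=\overline{B_n}$. Each $C_n$ is compact, and because each $B_n$ has property non-(wT) it certainly has property non-(T); a non-(T) witness for $B_n$ is also one for $C_n$ since $B_n\subseteq C_n$. By Lemma~\ref{nb-t} a compact set with property non-(T) is BIG, so every $C_n$ is BIG. The family $\{C_n:n\in\omega\}$ is decreasing, as $B_{n+1}\subseteq B_n$ gives $C_{n+1}\subseteq C_n$, so it is a countable decreasing collection of BIG sets; Lemma~\ref{ctbl_insctn_big} then makes $\bigcap_n C_n$ BIG, and Lemma~\ref{b-base} provides a node $\rho$ with $[\rho]\subseteq\bigcap_n C_n$.

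It remains to check that this $\rho$ satisfies the condition from the reduction. For any finite extension $\hat\rho$ of $\rho$ and any $n$, the set $[\hat\rho]$ is a nonempty product-open set contained in $[\rho]\subseteq C_n=\overline{B_n}$; a nonempty open set sitting inside the closure of $B_n$ must meet $B_n$, so $[\hat\rho]\cap B_n\neq\emptyset$. This is precisely what the reduction requires, and the argument of the first paragraph completes the proof.

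The main obstacle, and the reason both the non-(wT) hypothesis and the cluster-point hypothesis are indispensable, is that containing a cylinder inside $\bigcap_n\overline{B_n}$ does not by itself place any point in $\bigcap_n B_n$: the cylinder lies in the closures, not in the sets. The work therefore splits into two places. Locating the common cylinder is handled entirely by the BIGness machinery, where the key move is recognizing that the decreasing closures $\overline{B_n}$ fall under the countable-intersection lemma (Lemma~\ref{ctbl_insctn_big}), whose proof already exploited compactness of the pieces. Converting ``meets each $B_n$'' into ``meets $\bigcap_n B_n$'' is then exactly the role of the cluster-point hypothesis, together with the easy but essential observation that $[\hat\rho]$ is product-closed, so cluster points of sequences drawn from it cannot escape.
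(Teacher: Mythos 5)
Your opening reduction and your location of the cylinder are both sound: it is true that property non-(T) for $\bigcap_n B_n$ follows once one finds a node $\rho$ all of whose finite extensions $\hat\rho$ satisfy $[\hat\rho]\cap B_n\neq\emptyset$ for every $n$ (since $[\hat\rho]$ is product-closed, a cluster point of witnesses $x_n\in[\hat\rho]\cap B_n$ stays in $[\hat\rho]$), and your derivation of a node $\rho$ with $[\rho]\subseteq\bigcap_n\overline{B_n}$ via Lemma~\ref{nb-t}, Lemma~\ref{ctbl_insctn_big} and Lemma~\ref{b-base} is correct. The gap is in your third paragraph, where you assert that $[\hat\rho]$ is ``a nonempty product-open set.'' The node $\rho$ produced by Lemma~\ref{b-base} will in general have countably \emph{infinite} domain, so $[\hat\rho]=[\rho\cup s]$ is open only in the $G_\delta$-topology; in the product topology it is merely closed, and the principle ``a nonempty open set inside $\overline{B_n}$ meets $B_n$'' does not apply to it. The step is not just unjustified but false as stated: take $\rho$ to be the all-zero function on $\omega$ and $B=\{x\in 2^{\omega_1}:\{n<\omega: x(n)=1\}\ \text{is finite and nonempty}\}$; then $[\rho]\subseteq\overline{B}$ in the product topology, yet $[\rho]\cap B=\emptyset$. (This $B$ violates the cluster-point hypothesis, so it is no counterexample to the lemma, but it shows that containment of an infinite-domain cylinder in $\overline{B_n}$ alone yields no point of $B_n$ inside that cylinder.)

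The missing device is the one the paper uses at exactly this point: write $\rho=\bigcup_m\rho_m$ as an increasing union of \emph{finite} partial functions. Given a finite $s$, each $[s\cup\rho_m]$ is genuinely product-open and contains a point of $[\rho\cup s]\subseteq\overline{B_m}$, so one can pick $y_m\in B_m\cap[s\cup\rho_m]$. Now apply the cluster-point hypothesis a second time, to the sequence $(y_m)$: its cluster point $y$ lies in $\bigcap_n B_n$, and since each $[s\cup\rho_m]$ is product-closed and contains the tail $(y_k)_{k\geq m}$, also $y\in\bigcap_m[s\cup\rho_m]=[\rho\cup s]$. This gives $[\rho\cup s]\cap\bigcap_n B_n\neq\emptyset$ directly, which in particular makes your reduction step superfluous --- establishing its hypothesis is exactly as hard as establishing the conclusion. (The paper's write-up differs cosmetically: it first proves $\overline{\bigcap_n B_n}=\bigcap_n\overline{B_n}$ and picks the $y_m$ in $\bigcap_n B_n$ itself, whereas your cylinder lies only in $\bigcap_n\overline{B_n}$; either variant works once the finite-approximation-plus-second-clustering argument is in place, but without it the promotion from closures to the sets themselves does not go through.)
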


\begin{proof}

Let $G=\bigcap \{B_n: n\in \omega\}$. First, we claim that $\overline{G}=\bigcap \{\overline{B_n}: n\in \omega\}$. Since $G\subset B_n\subset \overline{B_n}$ for each $n\in \omega$, $G\subset \bigcap \{\overline{B_n}: n\in \omega\}$. Hence $\overline{G}\subseteq \bigcap \{\overline{B_n}: n\in \omega\}$. Next, we prove that $\bigcap \{\overline{B_n}: n\in \omega\}\subseteq\overline{G}$. Take an $x\in \bigcap \{\overline{B_n}: n\in \omega\}\setminus G$. Fix a finite subset $s$ of $x$. We see that $[s]$ is an open neighborhood of $x$. So for each $n$, there is an $x_n\in B_n\cap [s]$. By assumption, there is a cluster point $y$ of $\{x_n: n\in \omega\}$ which lies in $G$. It is straightforward to see that $y$ also lies in $[s]$. Hence $[s]\cap G\neq \emptyset$. Therefore $x\in \overline{G}$.  Hence $\bigcap\{\overline{B_n}: n\in \omega\}\subset \overline{G}$.

Next we show that $G$ has property non-(T). For each $n$, $\overline{B_n}$ has property non-(wT) since $B_n$ does. So $\overline{B_n}$ has property non-(T), hence it is BIG by Lemma~\ref{nb-t}. By Lemma~\ref{ctbl_insctn_big},  $\bigcap \{\overline{B_n}: n\in \omega\}$ is BIG, hence so is $\overline{G}$. Then by Lemma~\ref{b-base}, there is a node $\rho\in 2^{<\omega_1}$ such that $[\rho]\subset \overline{G}$. Take an arbitrary $s\in\text{Fn}(\omega_1\setminus \text{Dom}(\rho), 2)$. We claim that $[\rho\cup s]\cap G\neq \emptyset$. Fix an $x\in [\rho\cup s]$. Note that $[\rho\cup s]\subseteq [\rho]\subseteq \overline{G}$. Clearly, $x\in \overline{G}$. Fix an increasing sequence $\{\rho_n: n\in\omega\}$ such that $\rho_n$ is a finite subset of $\rho$ for each $n\in\omega$ and $\rho=\bigcup\{\rho_n:n\in \omega\}$. Then for each $n\in\omega$, $([s\cup \rho_n])\cap \overline{G}\neq \emptyset$ because $[s\cup\rho_n]$ is an open neighborhood of $x$. We pick $y_n\in G\cap ([s\cup \rho_n])$. Notice that $y_n\in B_n$ for each $n$. There is a cluster point $y$ of $\{y_n: n\in \omega\}$ in $G$. It is straightforward to verify that $y\in [\rho]\cap [s]$. Hence $[\rho\cup s]\cap G\neq \emptyset$.  Therefore $G$ has property non-(T) witnessed by $\rho$. \end{proof}

\section{$\mathcal{K}(\mathbb{Q})$-directed compact cover of $2^{\omega_1}$}\label{bair-cate}

In this section, we prove that no $\K(\Q)$-directed collection of subsets of $2^{\omega_1}$ can cover the whole space if the elements in the collection are compact and non-BIG. We will divide the proof into three cases: 1) $\mathfrak{d}=\aleph_1$; 2) $\mathfrak{b}>\aleph_1$; 3) $\mathfrak{d}>\mathfrak{b}=\aleph_1$. Under the assumption $\mathfrak{d}>\mathfrak{b}=\aleph_1$, the result holds with $\Q$ replaced by any separable metric space.

\begin{lemma}\label{d=w1} Let $P$ be a directed set with $\text{cof}(P)=\aleph_1$. If $2^{\omega_1}$ has a $P$-directed compact cover $\mathcal{C}=\{K_p:p\in P\}$, then there exists a $p\in P$ such that $K_p$ is BIG.   \end{lemma}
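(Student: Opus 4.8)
The plan is to argue by contradiction: assuming that every member of the cover is non-BIG, I will construct a single point of $2^{\omega_1}$ that escapes all of the compact sets $K_p$, contradicting the fact that $\mathcal{C}$ is a cover. Since $\text{cof}(P)=\aleph_1$, I first fix a cofinal subset $\{p_\alpha:\alpha<\omega_1\}$ of $P$. Because $\mathcal{C}$ is $P$-directed and this subset is cofinal, the subfamily $\{K_{p_\alpha}:\alpha<\omega_1\}$ is again a compact cover of $2^{\omega_1}$: any $x$ lies in some $K_p$, and choosing $\alpha$ with $p\le p_\alpha$ gives $K_p\subseteq K_{p_\alpha}$, so $x\in K_{p_\alpha}$. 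Now suppose toward a contradiction that $K_p$ is non-BIG for every $p\in P$; in particular each $K_{p_\alpha}$ is compact and non-BIG, so by Lemma~\ref{nb-t} each $K_{p_\alpha}$ has property (T).

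The core of the argument is a transfinite recursion producing an increasing chain of nodes $\rho_\alpha\in 2^{<\omega_1}$ satisfying $[\rho_\alpha]\cap K_{p_\alpha}=\emptyset$ for every $\alpha<\omega_1$. At the initial stage and at successor stages I invoke property (T) of $K_{p_\alpha}$ to replace the node built so far by a finite extension whose basic neighborhood $[\rho_\alpha]$ misses $K_{p_\alpha}$; I may always lengthen this finite extension by one further coordinate so that $\text{Dom}(\rho_\alpha)$ strictly increases, which costs nothing, since shrinking $[\rho_\alpha]$ preserves disjointness from $K_{p_\alpha}$. At a limit stage $\lambda$ I first form $\bigcup_{\beta<\lambda}\rho_\beta$, which is a node because its domain is a countable supremum of countable ordinals, and then apply property (T) of $K_{p_\lambda}$ to extend it finitely and eliminate $K_{p_\lambda}$ as well.

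Finally I set $x=\bigcup_{\alpha<\omega_1}\rho_\alpha$. The step that needs care — the only genuine obstacle — is checking that $x$ is a bona fide element of $2^{\omega_1}$, i.e. that the domains $\text{Dom}(\rho_\alpha)$ exhaust $\omega_1$. This follows from the strict monotonicity built into the recursion: a strictly increasing $\omega_1$-indexed sequence of countable ordinals cannot have a countable supremum, so $\sup_{\alpha<\omega_1}\text{Dom}(\rho_\alpha)=\omega_1$ and $x$ is totally defined. Since $x$ extends $\rho_\alpha$ for every $\alpha$ and $[\rho_\alpha]\cap K_{p_\alpha}=\emptyset$, the point $x$ lies in no $K_{p_\alpha}$, contradicting that $\{K_{p_\alpha}:\alpha<\omega_1\}$ covers $2^{\omega_1}$. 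Hence some $K_p$ must be BIG, as claimed.

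I expect the combinatorial verification at the limit stages and the domain-exhaustion count to be the delicate points, while the use of property (T) (via Lemma~\ref{nb-t}) is what makes the recursion go through at each step; note that the finiteness of the extensions supplied by property (T), rather than mere countability, is exactly what drives the domains strictly upward toward $\omega_1$.
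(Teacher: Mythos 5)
Your proof is correct and follows essentially the same route as the paper's: fix an $\aleph_1$-sized cofinal subset, use Lemma~\ref{nb-t} to convert non-BIGness into property (T), and run a transfinite recursion producing an increasing chain of nodes $\rho_\alpha$ with $[\rho_\alpha]\cap K_{p_\alpha}=\emptyset$. The only (harmless) difference is bookkeeping at the end: the paper simply takes \emph{any} point of $2^{\omega_1}$ extending the union $\bigcup_{\alpha<\omega_1}\rho_\alpha$, whereas you force the domains to increase strictly so that the union is itself a total element; both yield the same contradiction.
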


\begin{proof} Suppose, for contradiction, that $K_p$ is non-BIG for each $p\in P$. By Lemma~\ref{nb-t}, $K_p$ has property (T) for each $p\in P$. Since $\text{cof}(P)=\aleph_1$, there exists a cofinal subset $\{p_\alpha: \alpha\in \omega_1\}$ of the directed set $P$. Clearly $\{K_{p_\alpha}: \alpha\in\omega_1\}$ covers $2^{\omega_1}$. Since $K_{p_\alpha}$ has property (T) for each $\alpha\in\omega_1$, we can inductively define a sequence $\rho_\alpha\in 2^{<\omega_1}$ such that i) $\rho_\alpha$ extends $\rho_\beta$ for all $\beta<\alpha$; ii) $[\rho_\alpha]\cap K_{p_\alpha}=\emptyset$. Then $\rho=\bigcup\{\rho_\alpha: \alpha<\omega_1\}$ is well-defined. Then any $x\in 2^{\omega_1}$ which is an extension of $\rho$ is not in $\bigcup\{K_{p_\alpha}: \alpha<\omega_1\}$. This contradiction finishes the proof. \end{proof}


\begin{lemma}\label{Pcover-wt} Let $\{K_f: f\in \omega^\omega\}$ be an $\omega^\omega$-directed collection of compact subsets of $2^{\omega_1}$. If  $\bigcap\{K[[f\supha_n]]: n\in \omega\}$ has property (T) for each $f\in \omega^\omega$, then  $\bigcup\{K_f:f\in \omega^\omega\}$ has property (wT). \end{lemma}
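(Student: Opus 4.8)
The plan is to rephrase the conclusion combinatorially and then argue by contradiction, using the two preservation lemmas just established. Writing $\langle\,\rangle$ for the empty sequence and $s^\frown k$ for the one‑step extension of a finite sequence $s\in\omega^{<\omega}$ by $k\in\omega$, I first record the key decomposition: since every $g\in\omega^\omega$ extending $s$ extends exactly one $s^\frown k$, we have the \emph{countable} union $K[[s]]=\bigcup_{k\in\omega}K[[s^\frown k]]$, and the set in question is the value at the root, $\bigcup\{K_f:f\in\omega^\omega\}=K[[\langle\,\rangle]]$. Thus the lemma says that a union organized along the tree $\omega^{<\omega}$ has property (wT) as soon as the tail intersection $\bigcap_n K[[f\supha_n]]$ along each branch $f$ has property (T). I would assume, toward a contradiction, that $K[[\langle\,\rangle]]$ has property non-(wT).

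The first move is to descend the tree to a single bad branch. Since property (wT) is preserved under countable unions (Lemma~\ref{ctbl_u_wt_wt}), the decomposition $K[[s]]=\bigcup_k K[[s^\frown k]]$ gives the contrapositive statement: if $K[[s]]$ has non-(wT), then some child $K[[s^\frown k]]$ has non-(wT). Starting from $K[[\langle\,\rangle]]$, which has non-(wT) by the contradiction hypothesis, and iterating, I build $f^{*}\in\omega^\omega$ by choosing $f^{*}(n)$ to be a value $k$ for which $K[[(f^{*}\supha_n)^\frown k]]$ has non-(wT). This produces a decreasing sequence $\{K[[f^{*}\supha_n]]:n\in\omega\}$ each term of which has property non-(wT).

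It remains to apply Lemma~\ref{ctbl_insctn_nwt_nt} to the decreasing sequence $B_n=K[[f^{*}\supha_n]]$: its terms have non-(wT) by construction, so once the cluster‑point hypothesis of that lemma is checked, it delivers that $\bigcap_n K[[f^{*}\supha_n]]$ has property non-(T), directly contradicting the assumption that $\bigcap_n K[[f^{*}\supha_n]]$ has property (T) and finishing the proof. The step I expect to be the main obstacle is precisely verifying that cluster‑point hypothesis, namely that every selector $x_n\in K[[f^{*}\supha_n]]$ has a cluster point in $\bigcap_n K[[f^{*}\supha_n]]$. A cluster point $y$ exists because $2^{\omega_1}$ is compact, and for each $N$ the tail $\{x_n:n\geq N\}$ lies in $K[[f^{*}\supha_N]]$, so certainly $y\in\overline{K[[f^{*}\supha_N]]}$. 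The difficulty is that $K[[f^{*}\supha_N]]$ is only the \emph{directed} union of the compacta $K_g$ over $\{g:g\supha_N=f^{*}\supha_N\}$ and need not be closed, so one must actually locate $y$ inside the union rather than merely in its closure. This is exactly where $\omega^\omega$-directedness should be essential: writing $x_n\in K_{g_n}$ with $g_n\supha_N=f^{*}\supha_N$, if the $g_n$ $(n\geq N)$ admit a pointwise upper bound $h$ with $h\supha_N=f^{*}\supha_N$, then $K_h$ is compact, contains $\{x_n:n\geq N\}$, and hence $y\in K_h\subseteq K[[f^{*}\supha_N]]$. Securing such a bound—equivalently, controlling the selectors so the cluster point cannot escape the non‑closed level sets—is the delicate heart of the argument, and is the place where compactness of $2^{\omega_1}$, the directedness of the cover, and (if needed) the branch $f^{*}$ chosen in the descent must all be used together.
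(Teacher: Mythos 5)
Your overall architecture is sound and is in fact a slightly different arrangement from the paper's: the paper fixes $f$, shows directly (by contradiction, via Lemma~\ref{ctbl_insctn_nwt_nt}) that some $K[[f\supha_{n_f}]]$ has property (wT), and then invokes the Lindel\"of property of $\omega^\omega$ to cover it by countably many sets $[f_i\supha_{n_{f_i}}]$ before applying Lemma~\ref{ctbl_u_wt_wt} once; you instead run a single global contradiction and use the contrapositive of Lemma~\ref{ctbl_u_wt_wt} at each node of $\omega^{<\omega}$ to descend to a bad branch $f^*$. That descent is correct: $K[[s]]=\bigcup_{k}K[[s^\frown k]]$ is a countable union, so non-(wT) of $K[[s]]$ forces non-(wT) of some child, and the resulting sets $K[[f^*\supha_n]]$ form a decreasing sequence with property non-(wT).

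There is, however, a genuine gap, which you flag yourself but do not close: the cluster-point hypothesis of Lemma~\ref{ctbl_insctn_nwt_nt} is left as a conditional (``if the $g_n$ admit a pointwise upper bound\dots''). Moreover, the property you actually record for the witnesses, namely $g_n\supha_N=f^*\supha_N$ for $n\geq N$, is genuinely insufficient: with only that much, the values $g_n(i)$ at coordinates $i\geq N$ can be unbounded in $n$ (for instance $g_n(N)=n$), no pointwise bound lying in $[f^*\supha_N]$ need exist, and the cluster point can a priori escape $K[[f^*\supha_N]]$. The missing idea is to use the full strength of $x_n\in K[[f^*\supha_n]]$: choose $g_n\in[f^*\supha_n]$, so that $g_n$ agrees with $f^*$ on \emph{all} coordinates below $n$, not merely below $N$. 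Then for each fixed coordinate $i$ one has $g_n(i)=f^*(i)$ for every $n>i$, so $h_N(i)=\max\{g_n(i):n\geq N\}$ is a maximum of finitely many values and hence well-defined; furthermore $h_N\in[f^*\supha_N]$ and $h_N\geq g_n$ for all $n\geq N$, so the single compact set $K_{h_N}$ contains the tail $\{x_n:n\geq N\}$ and therefore every cluster point of the sequence, which consequently lies in $\bigcap_{N}K[[f^*\supha_N]]$. This coordinatewise-maximum construction is precisely the $h_m$ device in the paper's proof; it is needed by your version of the argument just as much as by the paper's, and once it is inserted your tree-descent proof is complete and correct.
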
  

\begin{proof}  
First, we fix an $f\in \omega^\omega$. We claim that there exists an $n\in \omega$ such that $K[[f\supha_n]]$ has property (wT). Suppose not. We show that $\bigcap\{K[[f\supha_n]]: n\in \omega\}$  has property non-(T), which contradicts the assumption. By Lemma~\ref{ctbl_insctn_nwt_nt}, it is sufficient to show that any sequence $\{x_n: x_n\in K[[f\supha_n]]\}$ has a cluster point in $\bigcap\{K[[f\supha_n]]: n\in \omega\}$.  Let $\{x_n: n\in\omega\}$ be such a sequence. Then for each $n\in\omega$, there exists a $g_n\in [f\supha_n] $ such that $x_n\in K_{g_n}$. For each $m\in\omega$, we define a function $h_m\in\omega^\omega$ in the following way: for each $i\in \omega$, $h_m(i)=\max\{g_n(i): n\geq m\}$. It is straightforward to verify that $\{h_m:m\in\omega\}$ is a well-defined decreasing sequence and $h_m\geq g_n$ for all $n\geq m$. Hence we obtain a sequence of compact subsets $K_{h_m}$ such that $h_m\in [f\supha_m]$ for each $m$,  $x_i\in K_{h_m}$ for $i\geq m$, and $h_m\geq f$ for each $m$. By the compactness of $K_{h_0}$, $\{x_m: m\in \omega\}$ has a cluster point, namely, $x$. Since $K_{h_m}$ is compact for each $m$, we see that $x\in K_{h_m}$ for each $m$, hence, $x\in \bigcap \{K_{h_m}: m\in \omega\}$. Since $K_{h_m}\subset K[[f\supha_m]]$, $x\in \bigcap\{K[[f\supha_m]]: m\in\omega\}$. Hence by Lemma~\ref{ctbl_insctn_nwt_nt}, $\bigcap\{K[[f\supha_n]]: n\in \omega\}$ has property non-(T).

For each $f\in \omega^\omega$, let $n_f$ be the number such that $K[[f\supha_{n_f}]]$ has property (wT). By the Lindel\"{o}f property of $\omega^{\omega}$, we can get a sequence $f_i$ such that $\{[f_i\supha_{n_{f_i}}]: i\in \omega\}$ is an open cover of $\omega^\omega$. Hence $\bigcup\{K_f:f\in \omega^\omega\}=\bigcup \{K[[f_i\supha_{n_{f_i}}]]: i\in \omega\}$. By Lemma~\ref{ctbl_u_wt_wt}, $\bigcup \{ K[[f_i\supha_{n_{f_i}}]]: i\in \omega\}$ has property (wT). Therefore $\bigcup\{K_f:f\in \omega^\omega\}$ has property (wT).  \end{proof}



\begin{lemma}\label{int_base_t}Assume $\mathfrak{b}>\aleph_1$. Let $\{K_f: f\in \omega^\omega\}$ be an $\omega^\omega$-directed collection of compact subsets of $2^{\omega_1}$.

 Suppose that $K_f$  has property (T) for each $f\in \omega^\omega$.  Then for each $f\in \omega^\omega$,  $\bigcap\{K[[f\supha_n]]: n\in \omega\}$ has property (T). \end{lemma}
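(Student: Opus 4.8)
\emph{The plan is to argue by contradiction: from a failure of property~(T) I would produce an $\aleph_1$-sized family of functions in $\omega^\omega$ that $\mathfrak b>\aleph_1$ forces to be $\leq^\ast$-bounded, and then convert that bound into a single compact member of the directed family which is forced to be BIG, contradicting Lemma~\ref{nb-t}.}

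First I would fix $f$, write $G=\bigcap\{K[[f\supha_n]]:n\in\omega\}$, and assume toward a contradiction that $G$ has property non-(T), witnessed by a node $\rho_*\in 2^{<\omega_1}$; thus $[\hat\rho]\cap G\neq\emptyset$ for every finite extension $\hat\rho$ of $\rho_*$. For each such $\hat\rho$ I pick $x_{\hat\rho}\in[\hat\rho]\cap G$ and extract a single witnessing function exactly as in the proof of Lemma~\ref{Pcover-wt}: since $x_{\hat\rho}\in K[[f\supha_n]]$ for every $n$, choose $g_n\in[f\supha_n]$ with $x_{\hat\rho}\in K_{g_n}$ and set $h_{\hat\rho}(i)=\max\{g_n(i):n\in\omega\}$. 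This maximum is finite because $g_n(i)=f(i)$ for $n>i$, so $h_{\hat\rho}\in\omega^\omega$, $h_{\hat\rho}\geq f$ pointwise, and by directedness $x_{\hat\rho}\in K_{g_n}\subseteq K_{h_{\hat\rho}}$.

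The next step is the only place where $\mathfrak b>\aleph_1$ is used. There are at most $\aleph_1$ finite extensions of $\rho_*$, so $\{h_{\hat\rho}\}$ is a family of size at most $\aleph_1$ in $(\omega^\omega,\leq^\ast)$ and is therefore $\leq^\ast$-bounded by some $g$. To upgrade this to the pointwise bound that directedness requires, I would record for each $\hat\rho$ the least $N_{\hat\rho}$ with $h_{\hat\rho}(i)\leq g(i)$ for all $i\geq N_{\hat\rho}$, together with the initial segment $h_{\hat\rho}\supha_{N_{\hat\rho}}$. Since these data lie in the countable set $\omega\times\omega^{<\omega}$, uncountably many $\hat\rho$ share a common value, and then a single finite modification $g'$ of $g$ dominates all the corresponding $h_{\hat\rho}$ pointwise, so that $x_{\hat\rho}\in K_{h_{\hat\rho}}\subseteq K_{g'}$ for every $\hat\rho$ in this uncountable subfamily.

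Finally I would aim to contradict that $K_{g'}$ has property~(T). The clean reformulation I would exploit is that non-(T) of $G$ says precisely that $\overline{G}$ (product closure) projects onto a final product, hence is BIG and contains some $[\rho]$ by Lemma~\ref{b-base}; the goal is to transport this BIGness onto the single compact set $K_{g'}$, for then Lemma~\ref{nb-t} yields a contradiction with the standing hypothesis that each $K_g$ has property~(T). \textbf{The main obstacle} is that passing from the $\leq^\ast$-bound to a genuine pointwise bound costs pattern coverage: retaining only an uncountable subfamily of finite extensions may destroy the property that \emph{every} finite pattern over $\rho_*$ is realized, which is exactly what is needed to force $K_{g'}$ to be BIG (indeed a countable union of compact non-BIG sets can have BIG closure, so no soft argument suffices). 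Overcoming this is the heart of the proof, and I expect it to require a Delta-system refinement of the finite domains $\{\mathrm{Dom}(\hat\rho)\setminus\mathrm{Dom}(\rho_*)\}$ together with a finite-intersection (compactness) argument arranging that the surviving points realize a full final product of patterns inside the compact set $K_{g'}$.
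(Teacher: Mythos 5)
There is a genuine gap, and it is exactly the one you flag yourself. Your construction coincides with the paper's up to and including the appeal to $\mathfrak{b}>\aleph_1$: the paper also fixes a witnessing node $\rho$, builds for each finite pattern $s$ a function $h_s\geq f$ as the pointwise supremum of witnesses $h_{s,n}\in[f\supha_n]$, and takes a $\leq^\ast$-bound ($\geq f$) of this $\aleph_1$-sized family. The wrong turn is the next step: uniformizing over the countably many pairs $(N_{\hat\rho},\, h_{\hat\rho}\supha_{N_{\hat\rho}})$ to manufacture a genuine pointwise bound $g'$ surrenders all but an uncountable subfamily of the patterns, and property non-(T) of $K_{g'}$ needs a node \emph{all} of whose finite extensions meet $K_{g'}$. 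The Delta-system repair you propose cannot recover this lost coverage: nothing prevents the surviving patterns from being $\{\langle\alpha_\xi,0\rangle\}$ for distinct coordinates $\alpha_\xi$, $\xi<\omega_1$ (already a Delta-system with empty root), realized by points $x_{\hat\rho}$ that are $0$ at every coordinate outside $\text{Dom}(\rho_*)$; the closure of such a set of points has property (T), so everything you have retained about $K_{g'}$ is consistent with $K_{g'}$ having property (T), and no contradiction can follow.

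The missing idea --- the paper's finishing move --- is available verbatim inside your own setup: you never need to dominate $h_{\hat\rho}$ pointwise, only \emph{one} of your witnesses $g_n$ (for that $\hat\rho$), and those have their first $n$ values pinned to $f$. Replace your $\leq^\ast$-bound $g$ by $\max(g,f)$ taken coordinatewise, which is still a $\leq^\ast$-bound of the family. Given any finite extension $\hat\rho$ of $\rho_*$, pick $m$ with $g(i)\geq h_{\hat\rho}(i)$ for all $i\geq m$, and consider the witness $g_m\in[f\supha_m]$ with $x_{\hat\rho}\in K_{g_m}$. For $i<m$ you have $g(i)\geq f(i)=g_m(i)$, and for $i\geq m$ you have $g(i)\geq h_{\hat\rho}(i)\geq g_m(i)$; hence $g\geq g_m$ everywhere, so $x_{\hat\rho}\in K_{g_m}\subseteq K_g$. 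This holds for \emph{every} finite extension $\hat\rho$ simultaneously, with no thinning, so $K_g$ has property non-(T) witnessed by $\rho_*$, contradicting the hypothesis that every member of the family has property (T) (equivalently, by Lemma~\ref{nb-t}, $K_g$ would be BIG). This two-case computation is precisely how the paper concludes, with its $h_{s,n}$ playing the role of your $g_n$; the uniformization and Delta-system machinery are unnecessary.
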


\begin{proof} We fix $f\in \omega^\omega$. Let $G = \bigcap\{K[[f\supha_n]]: n\in \omega\}$ for convenience.

Suppose, for a contradiction, that $G$ has property non-(T) witnessed by the node $\rho\in 2^{<\omega_1}$, i.e., for any $s\in \text{Fn}(\omega_1\setminus \text{Dom}(\rho), 2)$, $([\rho]\cap [s])\cap G\neq \emptyset$.

Fix an $s\in \text{Fn}(\omega_1\setminus \text{Dom}(\rho), 2)$. For each $n$, since $([\rho\cup s])\cap K[[f\supha_n]]\neq \emptyset$, pick $h_{s, n}\in [f\supha_n]$ such that $([\rho\cup s])\cap K_{h_{s, n}}\neq \emptyset$. Then we define $h_s(m)=\max\{h_{s, n}(m): n\in\omega \}$ for each $m\in\omega$. The function $h_s$ is well-defined since $h_{s, n}(m)=f(m)$ for all $n\geq m$. We see that $h_s\geq h_{s, n}$ for each $n\in \omega$.

The collection $\{h_s: s\in \text{Fn}(\omega_1\setminus \text{Dom}(\rho), 2) \}$ has cardinality $\aleph_1$. As $\mathfrak{b}>\aleph_1$, there exists an $h\in \omega^\omega$ such that $h\geq f$ and $h\geq^\ast h_s$ for all $s\in \text{Fn}(\omega_1\setminus \text{Dom}(\rho), 2)$.

  We will obtain a contradiction by showing that $K_h$ has property non-(T) witnessed by $\rho$.  To this end, let $s\in \text{Fn}(\omega_1\setminus \text{Dom}(\rho), 2)$.  We claim that $([\rho\cup s])\cap K_h\neq \emptyset$.
Since $h\geq^* h_s$, choose $m\in \omega$ such that $h(i)\geq h_s(i)$ for all $i\geq m$. Then for each $i<m$, $h(i)\geq f(i)=h_{s, m}(i)$; and for each $i\geq m$, $h(i)\geq h_s(i)\geq h_{s, m}(i)$. Hence $h\geq h_{m, s}$, i.e. $K_h\supseteq K_{h_{s, m}}$. Therefore, $([\rho\cup s])\cap K_h\neq \emptyset$.  \end{proof}

By Lemma~\ref{Pcover-wt} and Lemma~\ref{int_base_t}, we obtain the following theorem.

\begin{theorem}\label{b>w1} Assume $\mathfrak{b}>\aleph_1$. Let $\{K_f: f\in \omega^\omega\}$ be an $\omega^\omega$-directed collection of compact subsets of $2^{\omega_1}$ with property (T).

Then $\bigcup\{K_f: f\in \omega^\omega\}$ has property (wT).
\end{theorem}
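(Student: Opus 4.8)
The plan is to combine the two preceding lemmas directly, since Theorem~\ref{b>w1} is essentially their composition under the hypothesis $\mathfrak{b}>\aleph_1$. First I would fix an $\omega^\omega$-directed collection $\{K_f: f\in\omega^\omega\}$ of compact subsets of $2^{\omega_1}$, each having property (T). The goal is to verify that the hypothesis of Lemma~\ref{Pcover-wt} is met, namely that $\bigcap\{K[[f\supha_n]]: n\in\omega\}$ has property (T) for every $f\in\omega^\omega$. But this is exactly the conclusion of Lemma~\ref{int_base_t}, whose hypotheses are precisely that $\mathfrak{b}>\aleph_1$ and that each $K_f$ has property (T). So under the standing assumption $\mathfrak{b}>\aleph_1$, Lemma~\ref{int_base_t} hands me the needed property-(T) assumption on the countable intersections for free.

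Having secured that, I would simply invoke Lemma~\ref{Pcover-wt}, whose statement says that if $\bigcap\{K[[f\supha_n]]: n\in\omega\}$ has property (T) for each $f\in\omega^\omega$, then $\bigcup\{K_f: f\in\omega^\omega\}$ has property (wT). This yields the conclusion immediately. In short, the proof is the chain: each $K_f$ has (T) $\Rightarrow$ (via Lemma~\ref{int_base_t}, using $\mathfrak{b}>\aleph_1$) each countable intersection $\bigcap\{K[[f\supha_n]]: n\in\omega\}$ has (T) $\Rightarrow$ (via Lemma~\ref{Pcover-wt}) the union $\bigcup\{K_f: f\in\omega^\omega\}$ has (wT).

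Because the two lemmas have been established already, there is no genuine obstacle left at this stage; the real work was distributed into Lemma~\ref{int_base_t} and Lemma~\ref{Pcover-wt}. If I were to pinpoint where the conceptual difficulty lives, it is in Lemma~\ref{int_base_t}: the assumption $\mathfrak{b}>\aleph_1$ is what allows one to dominate the $\aleph_1$-sized family $\{h_s: s\in\text{Fn}(\omega_1\setminus\text{Dom}(\rho),2)\}$ by a single $h\in\omega^\omega$, producing a single $K_h$ with property non-(T) and contradicting the standing assumption that every $K_f$ has property (T). That bounding step is the crux, and it is precisely the point at which the cardinal hypothesis enters. The present theorem merely packages these two results, so the write-up can be a one-line appeal to both lemmas.

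\begin{proof}
Fix an $\omega^\omega$-directed collection $\{K_f: f\in\omega^\omega\}$ of compact subsets of $2^{\omega_1}$ such that each $K_f$ has property (T). Since $\mathfrak{b}>\aleph_1$, Lemma~\ref{int_base_t} applies and gives that $\bigcap\{K[[f\supha_n]]: n\in\omega\}$ has property (T) for every $f\in\omega^\omega$. This is exactly the hypothesis of Lemma~\ref{Pcover-wt}, which therefore yields that $\bigcup\{K_f: f\in\omega^\omega\}$ has property (wT).
\end{proof}
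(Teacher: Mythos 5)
Your proof is correct and is exactly how the paper obtains this theorem: the paper states it with the remark ``By Lemma~\ref{Pcover-wt} and Lemma~\ref{int_base_t}, we obtain the following theorem,'' which is precisely your chain of applying Lemma~\ref{int_base_t} (using $\mathfrak{b}>\aleph_1$) and then Lemma~\ref{Pcover-wt}. Nothing further is needed.
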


We also obtain some results in ZFC. Let $P$ be a directed set and $\{K_p: p\in P\}$ a $P$-directed compact cover of $2^{\omega_1}$. A set-valued function $\psi$ from $P$ to $2^{\omega_1}$ is induced by $\{K_p: p\in P\}$ in the following way:  $\psi(p)=K_p$ for any $p\in P$.   

\begin{lemma} Let $P$ be a directed set equipped with a first countable topology such that any convergent sequence is bounded.

Let $\{K_p: p\in P\}$ be a $P$-directed compact cover of $2^{\omega_1}$
such that $K_p$ has property (T) for each $p\in P$. Suppose that the mapping induced by $\{K_p: p\in P\}$ is upper semi-continuous. Then for each $p\in P$, $K_p=\bigcap\{K[B_n]: n\in \omega\}$ where $\{B_n: n\in\omega\}$ is a countable local base at $p$ and $K[B_n]=\bigcup\{K_q:q\in B_n\}$.
\end{lemma}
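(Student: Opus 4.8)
The plan is to establish the two inclusions $K_p\subseteq\bigcap\{K[B_n]:n\in\omega\}$ and $\bigcap\{K[B_n]:n\in\omega\}\subseteq K_p$ separately; the first is immediate and all the content lies in the second. For the first inclusion, observe that each $B_n$, being a member of a local base at $p$, is a neighborhood of $p$, so $p\in B_n$ and hence $K_p\subseteq\bigcup\{K_q:q\in B_n\}=K[B_n]$ for every $n$. Intersecting over $n$ gives $K_p\subseteq\bigcap\{K[B_n]:n\in\omega\}$, and property (T) plays no role here.

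The heart of the matter is the reverse inclusion, and this is where upper semi-continuity of $\psi$ enters. I would argue by contradiction: suppose some $x\in\bigcap\{K[B_n]:n\in\omega\}$ fails to lie in $K_p$. Since $2^{\omega_1}$ is compact Hausdorff, the compact set $K_p=\psi(p)$ and the point $x$ can be separated by disjoint open sets $U\supseteq K_p$ and $W\ni x$. Applying upper semi-continuity of $\psi$ at $p$ to the open set $U\supseteq\psi(p)$ yields a neighborhood $V$ of $p$ with $\psi(q)=K_q\subseteq U$ for all $q\in V$. Because $\{B_n:n\in\omega\}$ is a local base at $p$, I may fix $N$ with $B_N\subseteq V$. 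Now $x\in K[B_N]=\bigcup\{K_q:q\in B_N\}$, so there is some $q\in B_N\subseteq V$ with $x\in K_q\subseteq U$, contradicting $x\in W$ and $U\cap W=\emptyset$. Hence every point of the intersection lies in $K_p$, and the two inclusions together give the desired identity.

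The step to get exactly right is the set-valued reading of upper semi-continuity, namely that a whole neighborhood of $p$ can be chosen whose $\psi$-image is swept into a prescribed open superset of $K_p$; once that is in hand, the separation of a point from a compact set in a compact Hausdorff space finishes things, so I expect no serious obstacle. It is worth flagging that neither property (T) nor the hypothesis that convergent sequences in $P$ be bounded is actually used in this identity; they are presumably standing assumptions feeding the subsequent results. A more thematic alternative route does invoke the topology on $P$: passing to a decreasing local base $\{B_n\}$, a point $x\in\bigcap\{K[B_n]:n\in\omega\}$ supplies choices $q_n\in B_n$ with $x\in K_{q_n}$ and $q_n\to p$, whereupon the closed-graph property of an upper semi-continuous map with compact values into a compact Hausdorff space forces $x\in K_p$. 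I would nonetheless present the direct separation argument above as the primary proof, since it is the shortest and cleanest.
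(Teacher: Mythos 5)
Your proof is correct and takes essentially the same route as the paper: the paper also notes the easy inclusion $K_p\subseteq\bigcap\{K[B_n]:n\in\omega\}$, then for a point $x$ in the intersection but not in $K_p$ picks an open $U\supseteq K_p$ with $x\notin U$, uses upper semi-continuity plus the local base to get $K[B_{n_0}]\subseteq U$ for some $n_0$, and reaches the same contradiction (your extra open set $W$ is harmless but unnecessary, since $x\notin U$ already suffices). Your side remark is also accurate: the paper's proof likewise never uses property (T) or the hypothesis that convergent sequences are bounded.
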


\begin{proof} Fix a $p\in P$. It is clear that $K_p\subseteq \bigcap\{K[B_n]: n\in \omega\}$. Suppose that $\bigcap\{K[B_n]: n\in \omega\}\setminus K_p\neq\emptyset$. Fix $x\in \bigcap\{K[B_n]: n\in \omega\}\setminus K_p$. For each $n\in \omega$, we take $q_n\in B_n$ such that $x\in K_{q_n}$. Pick an open set $U$ such that $x\notin U$ and $K_p\subset U$. By the upper semi-continuity of the induced set-valued mapping, there is an $n_0$ such that $K[B_{n_{0}}]\subset U$. Then we get $x\in K_{q_{n_0}}\subset K[B_{n_0}]\subset U$ which is a contradiction.
\end{proof}

\begin{lemma} Let $\{K_f: f\in \omega^\omega\}$ be an $\omega^\omega$-directed collection of compact subsets of $2^{\omega_1}$.
 Suppose that $K_f$ has property (T) for each $f\in \omega^\omega$.  Then for each $f\in \omega^\omega$,  $\bigcap\{K[[f\supha_n]]: n\in \omega\}$ has property non-(F).

\end{lemma}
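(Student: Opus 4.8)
The plan is to argue by contradiction. Fix $f\in\omega^\omega$ and write $G=\bigcap\{K[[f\supha_n]]:n\in\omega\}$. By definition $G$ fails property (F) exactly when no basic $G_\delta$-box lies inside it, so I would suppose that $G$ has property (F), witnessed by some node $\rho\in 2^{<\omega_1}$ with $[\rho]\subseteq G$, and aim to manufacture a single $g\in\omega^\omega$ for which $K_g$ is BIG. This contradicts property (T) of $K_g$ via Lemma~\ref{nb-t}, since (T) forces $K_g$ to be non-BIG.

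The first step is to exploit separability of the box. The set $[\rho]$ is homeomorphic to $2^{\omega_1\setminus\text{Dom}(\rho)}$, hence is separable in the product topology; fix a countable dense set $D=\{d_n:n\in\omega\}\subseteq[\rho]$. Since $[\rho]\subseteq G\subseteq K[[f\supha_n]]$ for every $n$, each point $d_n$ lies in $K[[f\supha_n]]$, so I may choose $g_n\in[f\supha_n]$ with $d_n\in K_{g_n}$. The crucial feature of this indexing is that $g_n$ agrees with $f$ on the first $n$ coordinates, i.e. $g_n(i)=f(i)$ whenever $n>i$.

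Next I would patch the $g_n$ together into one dominating function, using the same maximization trick as in Lemma~\ref{Pcover-wt}. Define $h(i)=\max\{g_n(i):n\in\omega\}$ for each $i\in\omega$. This maximum is finite, and hence $h\in\omega^\omega$ is well-defined, precisely because $g_n(i)=f(i)$ for all $n>i$, so only finitely many of the values $g_n(i)$ can differ from $f(i)$. By construction $h\geq g_n$ for every $n$, so the $\omega^\omega$-directedness of the collection gives $K_h\supseteq K_{g_n}\ni d_n$ for all $n$; therefore $D\subseteq K_h$. To close the loop, note that $K_h$ is compact, hence closed in the product topology, and $[\rho]$ is closed, so $[\rho]=\overline{D}\subseteq K_h$. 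By the observation preceding Lemma~\ref{b-base}, a compact set containing a box $[\rho]$ is BIG, so $K_h$ is BIG, contradicting that $K_h$ has property (T) (Lemma~\ref{nb-t}). This contradiction shows $G$ has property non-(F).

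The single delicate point I expect to be the main obstacle is the well-definedness of $h$: an arbitrary countable family in $(\omega^\omega,\leq)$ need not have a pointwise upper bound, so one cannot simply dominate countably many witnesses by one function. The construction circumvents this by drawing each witness $g_n$ from the matching level $[f\supha_n]$, which forces eventual agreement with $f$ coordinatewise and thereby makes the supremum finite. The remaining ingredients — separability of the box $[\rho]$ and closedness of the compact set $K_h$ — are routine.
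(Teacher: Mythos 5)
Your proof is correct and follows essentially the same route as the paper's: separability of $[\rho]$, witnesses $g_n\in[f\supha_n]$ with $d_n\in K_{g_n}$, the pointwise maximum $h$ (well-defined exactly for the reason you identify), and $D\subseteq K_h$. The only cosmetic difference is at the end: the paper concludes directly that $K_h$ has property non-(T) witnessed by $\rho$ (since $D$ meets every $[\rho\cup s]$), whereas you pass to $\overline{D}=[\rho]\subseteq K_h$ and deduce BIGness; by Lemma~\ref{nb-t} these are the same contradiction.
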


\begin{proof}Fix an $f\in \omega^\omega$. For convenience, let $G=\bigcap\{K[[f\supha_n]]: n\in \omega\}$.  Suppose, for a contradiction, that there is a $\rho\in 2^{<\omega_1}$ such that $[\rho]\subseteq G$. Since $[\rho]$ is separable, we pick a countable dense subset $D=\{d_n: n\in\omega\}$, i.e.,  for any $s\in \text{Fn}(\omega_1\setminus \text{Dom}(\rho), 2)$, $[\rho\cup s] \cap D\neq \emptyset$.  For each $n$, we can pick $g_n\in [f\supha_n]$ such that $d_n\in K_{g_n}$ and $g_n\geq f$. Define $h$ such that $h(m)=\max\{g_n(m): n\in \omega\}$ for each $m\in\omega$. It is clear that $h\geq g_n$ for each $n$, i.e. $K_h\supseteq K_{g_n}$ for each $n$. Hence $D\subset K_h$. Therefore $K_h$ has property non-(T) witnessed by $\rho$ which is a contradiction.
\end{proof}

Now we are ready to prove a general result under the assumption that $\mathfrak{d}=\aleph_1$ or $\mathfrak{b}>\aleph_1$. 

\begin{theorem}\label{d=w1orb>w1} Assume either $\mathfrak{d}=\aleph_1$ or $\mathfrak{b}>\aleph_1$.  If $2^{\omega_1}$ has a $\omega^\omega\times P$-directed compact cover $\{K_{(f, p)}: f\in \omega^\omega, p\in P\}$ with cof($P$)$=\aleph_1$, then there exist $f\in \omega^\omega$ and $p\in P$ such that $K_{(f, p)}$ is BIG.
\end{theorem}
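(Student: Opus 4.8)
The plan is to argue by contradiction, assuming every $K_{(f,p)}$ is non-BIG, and to treat the two hypotheses separately, since they force genuinely different behaviour: $\mathfrak{d}=\aleph_1$ (which forces $\mathfrak{b}=\aleph_1$) makes the whole index set small, whereas $\mathfrak{b}>\aleph_1$ (which forces $\mathfrak{d}>\aleph_1$) does not. Throughout I would record, via Lemma~\ref{nb-t}, that ``compact and non-BIG'' is the same as ``property (T)'', so the contradiction hypothesis reads: $K_{(f,p)}$ has property (T) for all $(f,p)$. The combinatorial fact I would isolate first is the following ZFC statement, proved by a transfinite recursion of length $\omega_1$: if $\{A_\alpha:\alpha<\omega_1\}$ is a family of subsets of $2^{\omega_1}$ each having property (wT), then $\bigcup_{\alpha<\omega_1}A_\alpha\neq 2^{\omega_1}$. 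To see this, I build increasing nodes $\rho_\alpha\in 2^{<\omega_1}$ with $[\rho_\alpha]\cap A_\alpha=\emptyset$: at stage $\alpha$ the partial union $\sigma_\alpha=\bigcup_{\gamma<\alpha}\rho_\gamma$ is still a node, because $\alpha$ is countable and each $\rho_\gamma$ has countable domain, and property (wT) of $A_\alpha$ supplies an extension $\rho_\alpha\supseteq\sigma_\alpha$ with $[\rho_\alpha]\cap A_\alpha=\emptyset$. Any $x\in 2^{\omega_1}$ extending $\bigcup_{\alpha<\omega_1}\rho_\alpha$ then lies in every $[\rho_\alpha]$, hence in no $A_\alpha$. (The recursion breaks at stage $\omega_1$, where $\sigma_{\omega_1}$ would have uncountable domain; this is exactly why $\aleph_1$ cannot be improved.)

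For the case $\mathfrak{d}=\aleph_1$ I would not even invoke the contradiction hypothesis. Since the everywhere-domination cofinality of $\omega^\omega$ equals $\mathfrak{d}$ (a $\leq^\ast$-cofinal family of size $\mathfrak{d}$ becomes $\leq$-cofinal of the same size after adjoining the countably many finite initial modifications of each member), there is a $\leq$-cofinal subset of $\omega^\omega$ of size $\aleph_1$. Its product with a cofinal subset of $P$ of size $\aleph_1$ shows $\mathrm{cof}(\omega^\omega\times P)=\aleph_1$. As $\{K_{(f,p)}\}$ is $(\omega^\omega\times P)$-directed, Lemma~\ref{d=w1} applied to the directed set $\omega^\omega\times P$ immediately yields a BIG member.

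For the case $\mathfrak{b}>\aleph_1$ I would collapse the $\omega^\omega$-coordinate first. Fix $p\in P$; the slice $\{K_{(f,p)}:f\in\omega^\omega\}$ is $\omega^\omega$-directed and consists of sets with property (T), so by Theorem~\ref{b>w1} its union $A_p=\bigcup_{f}K_{(f,p)}$ has property (wT). Monotonicity of the cover gives $A_p\subseteq A_{p'}$ whenever $p\leq p'$, so a cofinal $\{p_\alpha:\alpha<\omega_1\}\subseteq P$ produces a family $\{A_{p_\alpha}:\alpha<\omega_1\}$ of property-(wT) sets covering $2^{\omega_1}$ (any point lies in some $K_{(f,p)}$, hence in $A_{p_\alpha}$ for any $p_\alpha\geq p$). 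This contradicts the ZFC fact isolated above, finishing the case.

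The step I expect to be the crux is the reduction in the $\mathfrak{b}>\aleph_1$ case: turning each $\omega^\omega$-indexed slice into a single property-(wT) set is precisely what Theorem~\ref{b>w1} (and behind it Lemmas~\ref{Pcover-wt} and~\ref{int_base_t}) delivers, and it is the only place the hypothesis $\mathfrak{b}>\aleph_1$ enters. Everything else is bookkeeping: checking that the partial unions $\sigma_\alpha$ remain genuine nodes of $2^{<\omega_1}$ (so property (wT) may be applied at each stage), and the routine cofinality computation for $\omega^\omega\times P$. I would be careful to confirm that the recursion uses only countable initial segments throughout, so that the construction stays inside $2^{<\omega_1}$ until the very end, where a single point — not a node — is all that is needed to reach the contradiction.
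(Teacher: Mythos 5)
Your proposal is correct and follows essentially the same route as the paper: the $\mathfrak{d}=\aleph_1$ case via the cofinality computation for $\omega^\omega\times P$ and Lemma~\ref{d=w1}, and the $\mathfrak{b}>\aleph_1$ case by collapsing each $\omega^\omega$-slice to a property-(wT) set via Theorem~\ref{b>w1} and then running the length-$\omega_1$ recursion of increasing nodes to produce an omitted point. Your only departures are organizational --- isolating that recursion as a standalone ZFC lemma and spelling out why $\mathrm{cof}(\omega^\omega,\leq)=\mathfrak{d}$ --- which the paper leaves implicit or does inline.
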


\begin{proof}
If $\mathfrak{d}=\aleph_1$, then $\text{cof}(\omega^\omega\times P)=\aleph_1$. Then the result holds by Lemma~\ref{d=w1}.

Now assume that $\mathfrak{b}>\aleph_1$. Suppose that $K_{(f, p)}$ is non-BIG for any $f\in \omega^\omega$ and $p\in P$. By Lemma~\ref{nb-t}, $K_{(f, p)}$ has property (T) for any $f\in \omega^\omega$ and $p\in P$.  By Theorem~\ref{b>w1}, for each $p\in P$, $\bigcup\{K_{(f, p)}: f\in \omega^\omega\}$ has property (wT). Fix a dominating subset $\{p_\alpha: \alpha\in\omega_1\}$ of $P$. Let $K[p_\alpha]=\bigcup \{K_{(f, p_\alpha)}: f\in \omega^\omega\}$ for each $\alpha\in\omega_1$. Then $K[p_\alpha]$ has property (wT) for each $\alpha\in\omega_1$.  Let $\rho_0\in 2^{<\omega_1}$ with $[\rho_0]\cap K[p_0]=\emptyset$. Inductively build a sequence of $\{\rho_\alpha: \alpha\in\omega_1\}$ in $2^{<\omega_1}$ such that: i) $[\rho_\alpha]\cap K[p_\alpha]=\emptyset $ for each $\alpha\in\omega_1$; ii) $\rho_\alpha\supseteq \rho_\beta$ for each $\beta\leq \alpha$. Then  let $\gamma=\bigcup \{\rho_\alpha: \alpha<\omega_1\}$.  Then $\gamma$  is well-defined and any  $x\in 2^{\omega_1}$ which extends $\gamma$ is not in $\bigcup\{K_{(f, p)}: f\in \omega^\omega, p\in P\}$, which is a contradiction. \end{proof}

Next, using the result by Todor\v{c}evi\'{c} below, we obtain a more general result under the assumption $\mathfrak{d}>\mathfrak{b}=\aleph_1$.

\begin{lemma}\label{Tod}(Theorem 1.3, \cite{T89})
If $\mathfrak{b}=\aleph_1$, then $\omega^{\omega_1}$ has a subset  $X$ of cardinality $\aleph_1$, such that for any $\aleph_1$-sized subset $A$ of $X$, there exist a countable subset $D$ of $A$ and a $\gamma\in\omega_1$ such that $\pi_\gamma(D)$ is dense in $\omega^{\omega_1\setminus \gamma}$.
\end{lemma}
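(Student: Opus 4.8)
Since this is a cited theorem of Todor\v{c}evi\'c, my plan is to indicate how one would establish it through his oscillation calculus, whose only set-theoretic input is an unbounded $\leq^\ast$-increasing family in $\omega^\omega$ — and such a family of size $\aleph_1$ is exactly what the hypothesis $\mathfrak{b}=\aleph_1$ provides. First I would fix a $\leq^\ast$-increasing, $\leq^\ast$-unbounded sequence $\langle f_\alpha:\alpha<\omega_1\rangle$ in $\omega^\omega$ (a $\leq^\ast$-unbounded family of size $\aleph_1$ exists since $\mathfrak{b}=\aleph_1$, and a routine recursive thinning makes it $\leq^\ast$-increasing), and for each infinite $\beta<\omega_1$ a bijection $e_\beta:\omega\to\beta$. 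The candidate witness is $X=\{x_\alpha:\alpha<\omega_1\}\subseteq\omega^{\omega_1}$, where $x_\alpha(\beta)$ records, for $\beta>\alpha$, a finitary oscillation invariant of the pair $(f_\alpha,f_\beta)$ — the number of alternations of the set $\{n:f_\alpha(n)\le f_\beta(n)\}$ relative to the enumeration data — and $x_\alpha(\beta)=0$ otherwise. The reason to let oscillation supply the coordinate values is that it is an $\omega$-valued invariant which, on pairs drawn from an unbounded family, realizes \emph{every} natural number; this is precisely the mechanism needed to produce the arbitrary coordinate values that density requires.

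The engine of the argument is Todor\v{c}evi\'c's oscillation theorem: for the fixed unbounded family and for every uncountable $\Gamma\subseteq\omega_1$, the values $\mathrm{osc}(f_\alpha,f_\beta)$ with $\alpha<\beta$ in $\Gamma$ exhaust $\omega$; moreover, by analysing oscillation separately on disjoint blocks of $\omega$ one can prescribe several oscillation values at once. I would then given an uncountable $A=\{x_\alpha:\alpha\in S\}$ run a pressing-down / $\Delta$-system uniformization on $S$ to pass to an uncountable $S'\subseteq S$ on which the relevant data are uniform: the bijections $e_\beta$ agree on a common finite root below the splitting point, the positions $e_\beta^{-1}(\alpha)$ stabilize, and suitable initial segments of the $f_\alpha$ are fixed.

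With this uniform family in hand I would choose a countable $T\subseteq S'$, set $\gamma=\sup T+1$ (so every coordinate $\beta\ge\gamma$ lies strictly above every index in $T$, forcing the coordinate values to be genuine oscillation values), and let $D=\{x_\alpha:\alpha\in T\}$. The verification to carry out is that $D$ realizes every finite pattern on coordinates past $\gamma$, i.e.\ that for each finite $F\subseteq\omega_1\setminus\gamma$ and each target $t\in\omega^{F}$ there is $\alpha\in T$ with $x_\alpha\!\supha_F=t$; this says exactly that $\pi_\gamma(D)$ is dense in $\omega^{\omega_1\setminus\gamma}$, and it is delivered by applying the oscillation theorem inside $S'$.

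I expect the main obstacle to be the \emph{simultaneity}: realizing an arbitrary target $t\in\omega^{F}$ at all coordinates of $F$ at once, rather than one coordinate at a time. This forces the oscillation computations on the disjoint intervals of $\omega$ attached to distinct coordinates to behave independently, and it demands that the uniformization be strong enough to decouple the finitely many coordinate computations. Matching the purely combinatorial oscillation values to prescribed targets while keeping $D$ countable and $\gamma$ a single fixed ordinal is the crux — it is precisely this multi-coordinate independence, rather than any single density statement, that makes the result a theorem rather than a routine separability argument.
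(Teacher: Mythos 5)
The first thing to compare against is easy to state: the paper does \emph{not} prove this lemma. It is quoted as Theorem 1.3 of Todor\v{c}evi\'{c}'s \emph{Partition Problems in Topology} \cite{T89}, and the only contribution of the paper itself is the remark immediately after it (that $\omega$ may be replaced by $2$ by reducing coordinates modulo $2$). So your reconstruction has to stand on its own, and it does not: it is a strategy outline whose decisive step you yourself declare open in your final paragraph. The concrete gap is a mismatch of quantifier structure between the engine you invoke and the statement you need. The oscillation theorem, in the form you cite it, is symmetric: for every uncountable $\Gamma\subseteq\omega_1$, the values $\mathrm{osc}(f_\alpha,f_\beta)$ with \emph{both} $\alpha<\beta$ drawn from $\Gamma$ exhaust (a tail of) $\omega$. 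The lemma needs an asymmetric, uniformized statement: one fixed countable $T$ and one fixed $\gamma$ such that for \emph{every} $\beta\in\omega_1\setminus\gamma$ --- indeed for every finite $F\subseteq\omega_1\setminus\gamma$ and every target $t\in\omega^F$ --- some single $\alpha\in T$ satisfies $\mathrm{osc}(f_\alpha,f_{\beta})=t(\beta)$ for all $\beta\in F$ simultaneously. Here $\alpha$ ranges over a fixed countable set while $\beta$ ranges over an uncountable final segment of coordinates; no iteration of ``pass to an uncountable subfamily and apply the pair statement'' can produce that uniformity, so the appeal to the oscillation theorem does not discharge the density claim.

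The same problem surfaces in your prescription ``choose a countable $T\subseteq S'$, set $\gamma=\sup T+1$.'' An arbitrary countable subset of the uniformized family will not work: whether $\{f_\alpha:\alpha\in T\}$ realizes every finite oscillation pattern against a given $f_\beta$ depends on pointwise information about $f_\beta$ that your pressing-down/$\Delta$-system step (which only uniformizes the coding apparatus: roots, positions $e_\beta^{-1}(\alpha)$, initial segments) does not control. In Todor\v{c}evi\'{c}'s argument the countable set is extracted by a closing-off argument --- in modern terms $T=S\cap M$ and $\gamma=M\cap\omega_1$ for a countable elementary submodel $M$ containing the family --- and the substance of the proof is the verification that every $f_\beta$ with $\beta\geq\gamma$ escapes $M$ in the strong sense that $f_\beta\not\leq^* g$ for every $g\in M\cap\omega^\omega$ (this uses unboundedness plus elementarity: $\{\delta: f_\delta\leq^* g\}$ is a countable initial segment belonging to, hence contained in, $M$), and that this escape forces $f_\beta$ to cross the functions indexed by $T$ in every prescribed finite pattern, on finitely many coordinates $\beta_1,\dots,\beta_k$ at once. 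That verification is exactly the ``crux'' you name; naming it correctly is not the same as supplying it, so what you have is an accurate map of the proof, not a proof.
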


Note that the lemma above also holds with $\omega$ replaced by $2$; simply map $\omega^{\omega_1}$ onto $2^{\omega_1}$ by taking all coordinates modulo $2$.

Let $(M, d)$ be a separable metric space. Consider the Hausdorff metric $d^H$ defined on $\mathcal{K}(M)$: $$d^H(F, F')=\sup\{d(f, F'), d(F, f'): f\in F, f'\in F'\}.$$

It is well-known (see, e.g.,  \cite{C74}) that the space $(\mathcal{K}(M), d^H)$ is a separable metric space. First, we prove a result which will be used later.
\begin{lemma}\label{haus_metric}
Let $d$ be  a metric on a separable metric space $M$.

Then $d^H(F, \bigcup\{F_i: i=1,2, \ldots, n\})\leq \sup\{d^H(F, F_i): i=1, 2, \ldots, n\}$ given $F, F_1, \ldots, F_n\in \mathcal{K}(M)$.

\end{lemma}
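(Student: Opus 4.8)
The plan is to reduce the (symmetric) Hausdorff distance to its two one-sided ``excess'' components and bound each separately. For compact sets $A, B$ write the excess $e(A, B) = \sup_{a \in A} d(a, B)$, where $d(a, B) = \inf_{b \in B} d(a, b)$; then the definition of $d^H$ given above unpacks as $d^H(A, B) = \max\{e(A, B), e(B, A)\}$. Set $U = \bigcup_{i=1}^n F_i$, which is compact as a finite union of compacta and hence a legitimate element of $\mathcal{K}(M)$, and put $r = \sup\{d^H(F, F_i) : i = 1, \dots, n\}$; since the index set is finite this supremum is attained and equals $\max_i d^H(F, F_i)$, so that $d^H(F, F_i) \le r$ for \emph{every} $i$. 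It then suffices to show $e(F, U) \le r$ and $e(U, F) \le r$.

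For the bound $e(F, U) \le r$, I would fix $f \in F$ and use that $U \supseteq F_1$ (any single index works). Since enlarging the target set can only decrease a point-to-set distance, $d(f, U) \le d(f, F_1) \le e(F, F_1) \le d^H(F, F_1) \le r$. Taking the supremum over $f \in F$ yields $e(F, U) \le r$.

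For the bound $e(U, F) \le r$, I would fix $u \in U$. By the definition of the union there is some index $i$ with $u \in F_i$, and for that particular index $d(u, F) \le e(F_i, F) \le d^H(F, F_i) \le r$. Taking the supremum over $u \in U$ yields $e(U, F) \le r$. Combining the two bounds gives $d^H(F, U) = \max\{e(F, U), e(U, F)\} \le r$, which is the claim.

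There is essentially no serious obstacle here; the lemma is a soft consequence of the definitions. The only points deserving care are the asymmetry of the two-sided distance — the directions ``$F$ is close to $U$'' and ``$U$ is close to $F$'' must be handled separately, and they rely on different mechanisms (monotonicity of distance under enlarging the target for the first, membership of each point of $U$ in some single $F_i$ for the second) — together with the use of finiteness of the index set to pass from the defining supremum to a maximum, so that the pairwise bound $d^H(F, F_i) \le r$ is available for each $i$. Compactness itself enters only to guarantee that $U \in \mathcal{K}(M)$ and that the relevant infima and suprema are finite (indeed attained), so that every quantity written is a genuine metric value.
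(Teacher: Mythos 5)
Your proof is correct and is essentially the same argument as the paper's: both unpack $d^H$ into its two one-sided suprema and bound each term by comparing it against the appropriate $d^H(F,F_i)$. The only organizational difference is that you handle general $n$ directly via the excess decomposition (using monotonicity of $d(f,\cdot)$ under enlarging the target, and membership $u\in F_i$ for the other direction), whereas the paper proves the case $n=2$ via the identity $d(f,F_1\cup F_2)=\min\{d(f,F_1),d(f,F_2)\}$ and then inducts; nothing of substance changes.
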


\begin{proof} We will show that the result holds for $n=2$. Then inductively we obtain the general result.

Fix $F, F_1, F_2 \in \mathcal{K}(M)$. First, it is straightforward to verify that  $d(f, F_1\cup F_2)=\min\{d(f, F_i): i=1, 2\}$ for any $f\in F$. Then just note that $d^H(F, F_1\cup F_2)=\sup\{d(f, F_1\cup F_2), d(F, f'): f\in F, f'\in F_1\cup F_2\}$ and
$\sup\{d^H(F, F_i): i=1, 2\}=\sup\{d(f, F_1), d(f, F_2), d(F, f'): f\in F, f'\in F_1\cup F_2\}$. The result follows.
\end{proof}

\begin{theorem}\label{d>b=w1}
Assume $\mathfrak{d}>\mathfrak{b}=\aleph_1$. Let $\{K_F: F\in \mathcal{K}(M)\}$ be a $\mathcal{K}(M)$-directed compact cover of $2^{\omega_1}$ for some separable metric space $M$.

Then there exists $F\in \mathcal{K}(M)$ such that $K_F$ is BIG.

\end{theorem}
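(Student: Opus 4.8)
The plan is to argue by contradiction, mirroring the structure of the $\mathfrak{b}>\aleph_1$ case (Lemma~\ref{int_base_t} together with Lemma~\ref{Pcover-wt}) but replacing the single‑dominating‑function step with Todor\v{c}evi\'{c}'s Lemma~\ref{Tod}. Suppose every $K_F$ is non‑BIG, so by Lemma~\ref{nb-t} each $K_F$ has property (T). For $F\in\mathcal K(M)$ and $n\in\omega$ write $B_n(F)=\{F'\in\mathcal K(M):d^H(F',F)<2^{-n}\}$ and $G_F=\bigcap_{n}K[B_n(F)]$, the analogue of $\bigcap_n K[[f\supha_n]]$. Fix the set $X=\{x_\xi:\xi<\omega_1\}\subseteq 2^{\omega_1}$ supplied by Lemma~\ref{Tod} in its $2^{\omega_1}$ form, and for each $\xi$ choose $F_\xi\in\mathcal K(M)$ with $x_\xi\in K_{F_\xi}$; note that then $x_\xi\in G_{F_\xi}$.

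The first, ZFC, half is the exact analogue of Lemma~\ref{Pcover-wt}: if $G_F$ had property (T) for every $F$, then $\bigcup_F K_F$ would have property (wT). Indeed, fixing $F$ and assuming $K[B_n(F)]$ has property non‑(wT) for every $n$, one checks with the help of Lemma~\ref{haus_metric} that any sequence $z_n\in K[B_n(F)]$ clusters inside $G_F$: choosing $F_n\in B_n(F)$ with $z_n\in K_{F_n}$ gives $F_n\to F$, so each tail $\overline{F\cup\bigcup_{n\ge m}F_n}$ is compact and lies within $d^H$‑distance $2^{-m+1}$ of $F$, whence a cluster point of $\{z_n\}$ lies in $\bigcap_m K[B_m(F)]=G_F$; Lemma~\ref{ctbl_insctn_nwt_nt} then forces $G_F$ to have property non‑(T), a contradiction. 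Thus some ball $B_{n_F}(F)$ has $K[B_{n_F}(F)]$ with property (wT); since $\mathcal K(M)$ is separable metric, hence Lindel\"of, countably many such balls cover $\mathcal K(M)$, and Lemma~\ref{ctbl_u_wt_wt} gives $\bigcup_F K_F$ property (wT). As $\bigcup_F K_F=2^{\omega_1}$ plainly has property non‑(wT), some $G_{F_0}$ has property non‑(T), witnessed by a node $\rho_0$.

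The second, harder, half is the analogue of Lemma~\ref{int_base_t}: under $\mathfrak d>\mathfrak b=\aleph_1$ this cannot happen. The mechanism I would use to refute ``$G_{F_0}$ is dense below $\rho_0$'' exploits that every point $y\in G_{F_0}$ carries a whole sequence of witnesses $F'\in\mathcal K(M)$ with $d^H(F',F_0)\to 0$ and $y\in K_{F'}$. Hence if I can produce a \emph{countable} set $S_0\subseteq G_{F_0}$ whose projection to some final product $2^{\omega_1\setminus\gamma}$ is dense, I can diagonalise: enumerating $S_0=\{y_j\}$ and picking for each $j$ a witness $G_j$ with $d^H(G_j,F_0)<2^{-j}$ and $y_j\in K_{G_j}$, the sequence $G_j\to F_0$, so $F^\dagger:=\overline{F_0\cup\bigcup_j G_j}$ is compact (a convergent sequence of compacta has compact union, again via Lemma~\ref{haus_metric}). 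By directedness $y_j\in K_{G_j}\subseteq K_{F^\dagger}$, so $S_0\subseteq K_{F^\dagger}$; then $\pi_\gamma(K_{F^\dagger})$ is closed and dense, hence all of $2^{\omega_1\setminus\gamma}$, i.e. $K_{F^\dagger}$ is BIG, contradicting property (T).

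The main obstacle is producing this countable projection‑dense $S_0$ \emph{inside Todor\v{c}evi\'{c}'s set $X$}, and this is where the hypotheses are spent. An arbitrarily chosen dense‑below‑$\rho_0$ family need not admit such an $S_0$ (extending each finite condition by zeros yields a dense family all of whose countable subfamilies are carried by countably many coordinates), so one must draw the witnesses $y_s\in[\rho_0\cup s]\cap G_{F_0}$ from $X$ and invoke Lemma~\ref{Tod}, whose hypothesis is precisely $\mathfrak b=\aleph_1$: an uncountable subfamily of $X$ then yields both the ordinal $\gamma$ and a countable subset projecting densely onto $2^{\omega_1\setminus\gamma}$. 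Concretely, I would show that $X\cap G_{F_0}$ is dense below $\rho_0$, using the condensation structure of $\{F_\xi\}$ in the separable metric space $\mathcal K(M)$ together with the relation $x_\xi\in G_{F_\xi}$; I expect the assumption $\mathfrak d>\aleph_1$ to enter exactly here, distinguishing this regime from $\mathfrak d=\aleph_1$ (handled directly by Lemma~\ref{d=w1}) and ensuring the $F_\xi$ cluster in the required way rather than running along an $\aleph_1$‑sized cofinal skeleton. Verifying that $X$ meets the fat intersection $G_{F_0}$ densely below $\rho_0$ is the crux of the whole argument.
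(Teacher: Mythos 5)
Your first half (the $\mathcal{K}(M)$-analogue of Lemma~\ref{Pcover-wt}, via Hausdorff-metric convergence and Lemma~\ref{ctbl_insctn_nwt_nt}) is sound, and so is your closing move: if a countable set $S_0$ with dense projection onto some $2^{\omega_1\setminus\gamma}$ can be found whose points $y_j$ admit witnesses $G_j$ with $d^H(G_j,F_0)\to 0$, then $F^\dagger=\overline{F_0\cup\bigcup_j G_j}$ is compact and $K_{F^\dagger}$ is BIG; this is exactly how the paper finishes. The genuine gap is the step you yourself flag as the crux, and the route you propose for it does not work. The $F_0$ produced by your Lindel\"of/(wT) argument has no relation whatsoever to Todor\v{c}evi\'{c}'s set $X$: nothing forces even a single $x_\xi$ to lie in $G_{F_0}=\bigcap_n K[B_n(F_0)]$, because membership in $G_{F_0}$ requires a witness within $2^{-n}$ of $F_0$ for \emph{every} $n$, whereas each $x_\xi$ comes equipped only with the one witness $F_\xi$ (and its supersets). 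Even if you instead choose $F_0$ as a condensation point of $\mathcal{X}=\{F_\xi:\xi<\omega_1\}$, so that each level $X\cap K[B_n(F_0)]$ is uncountable, this does not pass to the intersection: a decreasing sequence of uncountable sets can have countable or empty intersection, and nothing in your sketch prevents the points of $X$ captured at level $n$ from escaping at level $n+1$. So the statement ``$X\cap G_{F_0}$ is dense below $\rho_0$'' (or even uncountable) is unsupported, and with it the whole second half.

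The paper's proof is engineered precisely to avoid ever placing a point of $X$ inside such an intersection. It fixes $x_0$ with $A_n=K[B(F_{x_0},1/n)]\cap X$ uncountable for all $n$, applies Lemma~\ref{Tod} to each level $A_n$ \emph{separately} to get countable $D_n=\{d(m,n):m\in\omega\}$ and ordinals $\delta_n$, and then spends $\mathfrak{d}>\aleph_1$ in a different place from where you expect: the functions $h_s(n)=\min\{m:d(m,n)\in[s]\}$, for $s\in\text{Fn}(\omega_1\setminus\delta,2)$, form an $\aleph_1$-sized and hence non-dominating family, so some $g\in\omega^\omega$ satisfies $g(n)>h_s(n)$ for infinitely many $n$, for every $s$. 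The set $E=\{d(m,n):m\leq g(n)\}$ then meets every $[s]$ while selecting only \emph{finitely many} points from each level, and that finiteness per level is exactly what makes the union of witnesses $\hat F$ compact via Lemma~\ref{haus_metric}. In other words, density of the projection is achieved by points each of which is known to be covered only at its own level $n$, never in the intersection over all $n$; your requirement $S_0\subseteq G_{F_0}$ is strictly stronger than what is available. Note also that, as a consequence, the paper's argument is a direct construction of a BIG $K_{\hat F}$ and never needs the (T)/(wT) machinery of your first half at all.
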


\begin{proof}

Applying Lemma~\ref{Tod}, we fix an $\aleph_1$-sized subset $X$ of $2^{\omega_1}$ such that for any $\aleph_1$-sized subset $A$ of $X$, there exist a countable subset $D$ of $A$ and a $\gamma\in\omega_1$ such that $\pi_\gamma(D)$ is dense in $2^{\omega_1\setminus \gamma}$. Then for any $x\in X$, we can find an $F_x\in\mathcal{K}(M) $ such that $x\in K_{F_x}$. Let $d^H$ be the Hausdorff metric defined on $\mathcal{K}(M)$. Then the set $\mathcal{X}=\{F_x: x\in X\}$ equipped with $d^H$ is a separable metric space. For each $x\in X$ and $n\in \mathbb{N}$, let $B(F_x, 1/n)$ denote the open ball containing $F_x$ with radius $1/n$.

We claim that there exists an $x\in X$ such that $K[B(F_{x}, 1/n)]$ contains uncountably many elements in $X$ for each $n\in \mathbb{N}$, where $K[B(F_{x_0}, 1/n)]=\bigcup\{K_F: F\in B(F_{x_0}, 1/n)\}$. Suppose not. For each $x\in X$, we fix $n_x$ such that $K[B(F_x, 1/n_x)]$ contains only countably many elements in $X$. Since $(\mathcal{X}, d^H)$ is a separable metric space, there exists a countable subcollection $\mathcal{B}=\{B(F_{x_i}, 1/n_{x_i}): i\in \omega\}$ of $\{B(F_x, 1/n_x): x\in X\}$ which covers $\mathcal{X}$. Then $\bigcup\{K[B(F_{x_i}, 1/n_{x_i})]: i\in \omega\}$ contains only countably many elements of $X$ by the property of $n_{x_i}$'s, but it also contains all elements of $X$ since $\mathcal{X} \subseteq \bigcup \{B(F_{x_i}, 1/n_{x_i}): i\in \omega\}$. We have reached a contradiction.

Now fix an $x_0\in X$ such that for each $n\in \mathbb{N}$, $K[B(F_{x_0}, 1/n)]$ contains uncountably many elements in $X$. For each $n\in \mathbb{N}$,  define $A_n=K[B(F_{x_0}, 1/n)]\cap X$. By Lemma~\ref{Tod}, we can find a countable subset $D_n$ of $A_n$ and a $\delta_n\in\omega_1$ such that $\pi_\delta(D_n)$ is dense in $2^{\omega_1\setminus \delta_n}$. List $D_n$ as $\{d(m, n): m\in \omega\}$ for each $n\in \mathbb{N}$.

Define $\delta=\sup\{\delta_n: n\in \mathbb{N}\}$. For any $s\in \text{Fn}(\omega_1\setminus \delta, 2)$, define $h_s(n)=\min\{m: d(m, n)\in [s]\}$. Then we obtain an uncountable subset $\{h_s: s\in \text{Fn}(\omega_1\setminus \delta, 2)\}$ of $\omega^\omega$. Since $\mathfrak{d}>\mathfrak{b}=\aleph_1$, there exists a function $g\in \omega^\omega$ such that for each $s\in \text{Fn}(\omega_1\setminus \delta, 2)$, $g(i)> h_s(i)$ for infinitely many $i\in \mathbb{N}$. Let $E=\{d(m, n): m\leq g(n)\}$. We see that for any $s\in \text{Fn}(\omega_1\setminus \delta, 2)$, $[s]\cap E\neq \emptyset$, hence $\pi_\delta(E)$ is dense in $2^{\omega_1\setminus \delta}$.

For each $d(m, n)$ with $m\leq g(n)$, there exists an $F_{m, n}\in B(F_{x_0}, 1/n)$ such that $d(m, n)\in K_{F_{m,n}}$. Then by Lemma~\ref{haus_metric}, $d^H(F_{x_0}, \bigcup\{F_{m, n}: m\leq g(n)\})<1/n$. Hence, $\hat{F}=\bigcup\{F_{m, n}: m<g(n), n\in \mathbb{N}\}\cup F_{x_0}$ is a compact subset of $M$. Also  $E\subset K_{\hat{F}}$. Hence $K_{\hat{F}}$ is BIG witnessed by $\delta$.\end{proof}

Combining Theorem~\ref{d=w1orb>w1} and Theorem~\ref{d>b=w1}, we obtain our main result in this section.

\begin{theorem}\label{k(q)_d_b} If $2^{\omega_1}$ has a $\mathcal{K}(\mathbb{Q})$-directed compact cover, then there exists an  $F\in \mathcal{K}(\mathbb{Q})$ such that $K_{F}$ is BIG. \end{theorem}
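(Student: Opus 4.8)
The plan is to reduce the statement to the two ZFC results already in hand, Theorem~\ref{d=w1orb>w1} and Theorem~\ref{d>b=w1}, by splitting into cases according to the cardinal characteristics $\mathfrak{b}$ and $\mathfrak{d}$. Since $\aleph_1\leq\mathfrak{b}\leq\mathfrak{d}$ always holds, exactly one of the following occurs: (i)~$\mathfrak{d}=\aleph_1$; (ii)~$\mathfrak{b}>\aleph_1$; (iii)~$\mathfrak{d}>\mathfrak{b}=\aleph_1$. Cases (i) and (ii) will be handled together via Theorem~\ref{d=w1orb>w1}, and case (iii) directly via Theorem~\ref{d>b=w1}. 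The device that connects $\mathcal{K}(\mathbb{Q})$ to the hypotheses of Theorem~\ref{d=w1orb>w1} is the Tukey upper bound $\mathcal{K}(\mathbb{Q})\leq_T\omega^\omega\times[\omega_1]^{<\omega}$ of Gartside and Mamatelashvili \cite{GM17}.

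Suppose $2^{\omega_1}$ has a $\mathcal{K}(\mathbb{Q})$-directed compact cover $\{K_F:F\in\mathcal{K}(\mathbb{Q})\}$, and assume first that we are in case (i) or (ii). Because $\omega^\omega\times[\omega_1]^{<\omega}\geq_T\mathcal{K}(\mathbb{Q})$ and both posets are Dedekind complete, I would fix an order-preserving map $\phi:\omega^\omega\times[\omega_1]^{<\omega}\to\mathcal{K}(\mathbb{Q})$ with cofinal image. Setting $K'_{(f,s)}=K_{\phi(f,s)}$ then yields, exactly as in the first lemma of the introduction, an $(\omega^\omega\times[\omega_1]^{<\omega})$-directed compact cover of $2^{\omega_1}$. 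The key observation is that $P:=[\omega_1]^{<\omega}$, ordered by inclusion, is a directed set with $\text{cof}(P)=\aleph_1$: any cofinal family must have union $\omega_1$ and hence cardinality at least $\aleph_1$, while $|P|=\aleph_1$. Thus $\{K'_{(f,s)}\}$ is an $\omega^\omega\times P$-directed compact cover with $\text{cof}(P)=\aleph_1$, so Theorem~\ref{d=w1orb>w1} produces a pair $(f,s)$ with $K'_{(f,s)}=K_{\phi(f,s)}$ BIG. Since $\phi(f,s)\in\mathcal{K}(\mathbb{Q})$, this is the required $F$.

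In case (iii), where $\mathfrak{d}>\mathfrak{b}=\aleph_1$, no conversion is needed: as $\mathbb{Q}$ is itself a separable metric space, Theorem~\ref{d>b=w1} applies verbatim to the given $\mathcal{K}(\mathbb{Q})$-directed compact cover and directly supplies an $F\in\mathcal{K}(\mathbb{Q})$ with $K_F$ BIG. This exhausts all three cases. The only genuinely delicate point in the assembly—and the step I expect to be the real obstacle conceptually—is recognizing that case (iii) lies outside the scope of Theorem~\ref{d=w1orb>w1} (both $\mathfrak{d}=\aleph_1$ and $\mathfrak{b}>\aleph_1$ fail there), so the factorization route through $\omega^\omega\times[\omega_1]^{<\omega}$ is unavailable and one must instead invoke the more flexible Theorem~\ref{d>b=w1}. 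This is precisely why that theorem was proved for an arbitrary separable metric space $M$ rather than only for the factored poset.
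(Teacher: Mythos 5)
Your proof is correct and takes essentially the same approach as the paper: the case $\mathfrak{d}>\mathfrak{b}=\aleph_1$ is dispatched directly by Theorem~\ref{d>b=w1}, and the remaining cases are handled by transferring the cover along an order-preserving map $\phi:\omega^\omega\times[\omega_1]^{<\omega}\to\mathcal{K}(\mathbb{Q})$ with cofinal image and invoking Theorem~\ref{d=w1orb>w1}. Your extra verifications (that the three cases are exhaustive and that $\text{cof}([\omega_1]^{<\omega})=\aleph_1$) are details the paper leaves implicit, but the argument is the same.
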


\begin{proof} Let $\{K_F: F\in \mathcal{K}(\mathbb{Q})\}$ be a $\mathcal{K}(\mathbb{Q})$-directed compact cover of $2^{\omega_1}$.

If $\mathfrak{d}>\mathfrak{b}=\aleph_1$, then there exists an  $F\in \mathcal{K}(\mathbb{Q})$ such that $K_{F}$ is BIG by Theorem~\ref{d>b=w1}.

Since $\mathcal{K}(\mathbb{Q})$ is a Tukey quotient of  $\omega^\omega\times [\omega_1]^{<\omega}$, we fix a order-preserving mapping $\phi: \omega^\omega\times [\omega_1]^{<\omega}\rightarrow \mathcal{K}(\mathbb{Q})$ such that $\phi(\omega^\omega\times [\omega_1]^{<\omega})$ is cofinal in  $\mathcal{K}(\mathbb{Q})$. Then for each $p\in \omega^\omega\times [\omega_1]^{<\omega}$, we define $K_p=K_{\phi(p)}$. Then we get a $\omega^\omega\times [\omega_1]^{<\omega}$-directed compact cover of $2^{\omega_1}$. Hence by Theorem~\ref{d=w1orb>w1}, there is a $p\in \omega^\omega\times [\omega_1]^{<\omega}$ with $K_p$ (hence, $K_{\phi(p)}$)  being BIG under the assumption
that $\mathfrak{d}=\aleph_1$ or $\mathfrak{b}>\aleph_1$. \end{proof}

\section{Compact Spaces with a $\K(\Q)$-diagonal}

We now give a sufficient condition for  $X^2\setminus \Delta$, where $X$ is compact, to not have a $P$-directed compact cover for some directed set $P$ with calibre~($\omega_1, \omega$). Then we deduce our main result.

\begin{theorem}\label{b_nPd} Let $P$ be a directed set with calibre~$(\omega_1, \omega)$ and $X$ a compact space that maps continuously onto $2^{\omega_1}$.

Suppose that for any $P$-directed compact cover $\{K_p:p\in P\}$ of $2^{\omega_1}$, there is a $p\in P$ such that $K_p$ is BIG. Then $X^2\setminus \Delta$ does not have a $P$-directed compact cover.

\end{theorem}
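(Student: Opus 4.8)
The plan is to argue by contradiction. Suppose $X^2\setminus\Delta$ carries a $P$-directed compact cover $\{K_p:p\in P\}$, and let $q:X\to 2^{\omega_1}$ be a continuous surjection. The first move is a harmless reduction: choose a closed subspace $X_0\subseteq X$ on which $q$ restricts to an \emph{irreducible} surjection $q_0:X_0\twoheadrightarrow 2^{\omega_1}$ (such $X_0$ exists by a standard Zorn argument). Since $X_0$ is closed, $X_0^2$ is closed in $X^2$, so $\{K_p\cap X_0^2:p\in P\}$ is again a $P$-directed compact cover, this time of $X_0^2\setminus\Delta_{X_0}$. Thus I may assume outright that $q$ is irreducible, which will be what lets me control fibers at the end. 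Throughout I use that $q$ is a closed map (compact domain, Hausdorff range), so the set-valued assignment $w\mapsto q^{-1}(w)$ is upper semicontinuous.

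Next I build covers of $2^{\omega_1}$ from $\{K_p\}$ and feed them into the hypothesis. For each $\alpha<\omega_1$ let $\iota_\alpha:2^{\omega_1}\to (2^{\omega_1})^2$ be the off-diagonal embedding $\iota_\alpha(z)=(z,z^{(\alpha)})$, where $z^{(\alpha)}$ flips the single coordinate $\alpha$; its image misses the diagonal of $(2^{\omega_1})^2$. Pushing the cover forward by $q\times q$ and pulling back gives
$$M^\alpha_p=\iota_\alpha^{-1}\big((q\times q)(K_p)\big).$$
Each $M^\alpha_p$ is compact (it is $\iota_\alpha^{-1}$ of the compact set $(q\times q)(K_p)\cap \iota_\alpha(2^{\omega_1})$, and $\iota_\alpha$ is a homeomorphism onto its closed image), the family is $P$-directed, and it covers $2^{\omega_1}$: for any $z$ the point $\iota_\alpha(z)$ lies off the diagonal, hence in $(q\times q)(X^2\setminus\Delta)=\bigcup_p(q\times q)(K_p)$. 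By hypothesis each cover $\{M^\alpha_p:p\in P\}$ has a BIG member $M^\alpha_{p_\alpha}$. Now apply calibre~$(\omega_1,\omega)$ to $\{p_\alpha:\alpha<\omega_1\}\subseteq P$: there is an infinite $A\subseteq\omega_1$ and a $p^*\in P$ with $p_\alpha\le p^*$ for all $\alpha\in A$. Since $M^\alpha_{p_\alpha}\subseteq M^\alpha_{p^*}$ and BIGness is upward closed among compact subsets of $2^{\omega_1}$, the set $M^\alpha_{p^*}$ is BIG for every $\alpha\in A$.

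For the endgame, fix a strictly increasing sequence $\alpha_0<\alpha_1<\cdots$ in $A$ and a target pattern $w\in 2^{\omega_1}$ whose fiber $q^{-1}(w)$ I will want to understand. Using the BIGness of $M^{\alpha_n}_{p^*}$ (so it projects onto a final product past some $\delta_n$), I select, for each $n$, a point $z_n$ agreeing with $w$ on the final segment beyond $\sup_n\delta_n$ off of $\{\alpha_n\}$, together with a pair $(x_n,y_n)\in K_{p^*}$ with $q(x_n)=z_n$ and $q(y_n)=z_n^{(\alpha_n)}$. By construction $q(x_n)$ and $q(y_n)$ agree off the single coordinate $\alpha_n$, and $\alpha_n\to\infty$ means that for each fixed coordinate $\gamma$ the two images eventually agree at $\gamma$. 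Passing to a cluster point $(x,y)\in K_{p^*}$ of the sequence and using continuity of $q$ coordinatewise, I obtain $q(x)=q(y)$; upper semicontinuity of fibers forces this common value into $q^{-1}(w)$.

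The main obstacle is exactly the last upgrade, from $q(x)=q(y)$ to $x=y$: a continuous surjection can identify points, so a single quotient $q$ does not separate points of a fiber, and an off-diagonal compact set may well map to pairs on the diagonal of the cube. This is where irreducibility is essential. The plan is to choose the target $w$ to be a point of injectivity of the irreducible map $q$ (equivalently $q^{-1}(w)$ a singleton), so that the cluster point $(x,y)$ is forced onto $\Delta_X$; then $(x,y)\in K_{p^*}\cap\Delta_X$ contradicts $K_{p^*}\subseteq X^2\setminus\Delta$. Verifying that an irreducible surjection onto $2^{\omega_1}$ admits such a point of injectivity — or, failing a clean global statement, running the neighbourhood/cluster-point argument with the preimage sequences trapped in shrinking neighbourhoods so that any cluster pair is pinned to the diagonal — is the delicate heart of the proof; everything before it is bookkeeping that routes the off-diagonal cover through the BIGness hypothesis and the calibre condition.
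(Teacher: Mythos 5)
Your bookkeeping is correct as far as it goes: the irreducible reduction via Zorn, the compactness, $P$-directedness and covering property of the sets $M^\alpha_p=\iota_\alpha^{-1}\bigl((q\times q)(K_p)\bigr)$, the application of the hypothesis to each of these covers, and the calibre~$(\omega_1,\omega)$ step producing one $p^*$ with $M^\alpha_{p^*}$ BIG for infinitely many $\alpha$. But the proof stops exactly where the real difficulty begins, and the fix you propose for that difficulty is not available. Your endgame yields a pair $(x,y)\in K_{p^*}$ with $q(x)=q(y)$; since $K_{p^*}$ misses $\Delta$, this only says $q$ identifies two points of $X$ --- no contradiction. You hope to forbid this by choosing the target $w$ with $q^{-1}(w)$ a singleton, using irreducibility. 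This fails for two reasons. First, an irreducible surjection from a compact space onto $2^{\omega_1}$ need not have \emph{any} singleton fiber: the Gleason cover (absolute) $E(2^{\omega_1})\to 2^{\omega_1}$ is irreducible, and since every point of $2^{\omega_1}$ lies on the boundary of some regular open set, every fiber of this map has at least two points. The theorem must in particular apply to $X=E(2^{\omega_1})$ (which is already irreducible over $2^{\omega_1}$), so no argument requiring a point of injectivity can be completed. Second, even if some $q^{-1}(w)$ were a singleton, your cluster value $u=q(x)=q(y)$ is only guaranteed to agree with $w$ on a final segment $\omega_1\setminus\delta$ --- BIGness gives no control below $\delta$ --- so you would need injectivity at every point of the basic $G_\delta$-set determined by $w$ beyond $\delta$, an even stronger false demand. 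The ``shrinking neighbourhoods'' fallback is not an argument: the pairs $(x_n,y_n)$ are handed to you by BIGness of sets \emph{downstairs}, and you have no control over where in the fibers they sit. This loss of the pairing information is precisely the price of pushing the whole cover down to $2^{\omega_1}$.

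The paper's proof avoids this trap by never forming $(q\times q)(K_p)$ at all. It works upstairs: calling a closed $B\subseteq X$ BIG when $q(B)$ is BIG in $2^{\omega_1}$, it runs a transfinite recursion of length $\omega_1$ producing points $y_\alpha\in X$ and a \emph{decreasing} chain of BIG sets $Y_\alpha\subseteq X$ with $y_\alpha\in Y_\alpha$ and $\{y_\alpha\}\times Y_{\alpha+1}\subseteq K_{p_{\alpha+1}}$. At each successor step the hypothesis is applied to the cover $\{q(\pi_2(K_p\cap(\{y_\alpha\}\times Y'))):p\in P\}$ of a copy of $2^{\omega_1}$, where $Y'$ is a BIG subset of $Y_\alpha$ avoiding $y_\alpha$; the chosen BIG set $Y_{\alpha+1}$ therefore comes with the crucial extra feature that $\{y_\alpha\}\times Y_{\alpha+1}$ lies inside a \emph{single} element of the original cover --- the pairing information survives. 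Calibre~$(\omega_1,\omega)$ then bounds infinitely many $p_{\alpha_n+1}$ by one $p^*$, so all pairs $(y_{\alpha_n},y_{\alpha_m})$ with $n<m$ lie in the compact set $K_{p^*}$, and a double cluster-point argument places a diagonal point $(y,y)$ in $K_{p^*}$, a contradiction realized inside $X^2$ where no fiber identification can interfere. If you want to salvage your outline, replace the pushed-forward sets $(q\times q)(K_p)$ by slices $q(\pi_2(K_p\cap(\{y\}\times\cdot)))$ at points $y\in X$; the off-diagonal coordinate flip $\iota_\alpha$ then becomes unnecessary, because avoiding $y$ upstairs is achieved by restricting to a BIG set below a suitable $[\rho\cup\{\langle\eta,i\rangle\}]$ with $q(y)(\eta)\neq i$, exactly as in the paper.
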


\begin{proof} For any node $\rho\in 2^{<\omega_1}$, it is clear that $[\rho]$ is homeomorphic to $2^{\omega_1}$. Hence for any $P$-directed compact cover $\{K_p:p\in P\}$ of $[\rho]$, there is a $p\in P$ such that $K_p$ is BIG. Therefore by Lemma~\ref{b-base}, for any BIG subset $Z$ of $2^{\omega_1}$, if $\{K_p:p\in P\}$ is a $P$-directed compact cover of $Z$, then there is a $p\in P$ such that $K_p$ is BIG.

Let $\phi$ be a continuous map from $X$ onto $2^{\omega_1}$. For any subset $B$ of $X$, we say that $B$ is BIG if it is closed and $\phi(B)$ is a BIG subset of $2^{\omega_1}$. It is straightforward to verify that if $B$ is a BIG subset of $X$ and $y\in B$, then $B\setminus \{y\}$ still contains a BIG subset of $X$.

\medskip

\textbf{Claim ($\ast$):} If $\{B_n: n\in\omega\}$ is a decreasing sequence of BIG subsets of $X$, then $\bigcap\{B_n:n\in \omega\}$ is also a BIG subset of $X$.

\textbf{Proof of Claim ($\ast$):} It is clear that $\bigcap\{B_n:n\in \omega\}$ is closed and $\bigcap\{\phi(B_n):n\in \omega\}\supset \phi(\bigcap\{B_n: n\in \omega\})$. Since $\bigcap\{\phi(B_n):n\in \omega\}$ is a BIG subset of $2^{\omega_1}$ by Lemma~\ref{ctbl_insctn_big}, it is enough to show that $\bigcap\{\phi(B_n):n\in \omega\}\subset \phi(\bigcap\{B_n: n\in \omega\})$. Take any $x\in \bigcap\{\phi(B_n):n\in \omega\}$. For each $n$, there exists $y_n\in B_n$ such that $\phi(y_n)=x$. Since $X$ is compact, $\{y_n: n\in\omega\}$ has a cluster point $y$. We claim that $y\in \bigcap\{B_n: n\in \omega\}$, i.e. $x\in \phi(\bigcap\{B_n: n\in \omega\})$. Suppose not. Then there exists an $n_0\in \omega$ such that $y\notin B_{n_0}$. Let $U=X\setminus B_{n_0}$. Then $U$ is an open neighborhood of $y$ such that $y_n\notin U$ for all $n\geq n_0$. This contradicts with the fact that $y$ is a cluster point of $\{y_n: n\in \omega\}$. This finishes the proof of the claim.

\medskip

We will prove the statement of the theorem next. Suppose, for a contradiction, that $X$ has a $P$-diagonal witnessed by $\mathcal{C}=\{K_p:p\in P\}$.
Let $Y_0=X$ and choose $y_0\in Y_0$ and $p_0\in P$.  Let $i_0\in 2$ be such that
$\phi(y_0)(0)\neq i_0$.  Then $\phi^{-1}([<0, i_0>])$ is BIG and does not contain $y_0$, so $\{y_0\}\times \phi^{-1}([<0, i_0>])\subset X^2\setminus \Delta$. (Recall that $[<0, i_0>]=\{x\in 2^{\omega_1}: <0, i_0>\in x\}=\{x\in 2^{\omega_1}: x(0)=i_0\}$.)  Then $\{K_p\cap (\{y_0\}\times \phi^{-1}([<0, i_0>])): p\in P\}$ is a $P$-directed compact cover of $\{y_0\}\times \phi^{-1}([<0, i_0>])$. Hence $\{\phi(\pi_2(K_p\cap (\{y_0\}\times \phi^{-1}([<0, i_0>])))): p\in P\}$ is a $P$-directed compact cover of $[<0,i_0>]$.  By assumption, there is a $p_1\in P$ such that $\phi(\pi_2(K_{p_1}\cap (\{y_0\}\times \phi^{-1}([<0, i_0>]))))$ is a BIG subset of $2^{\omega_1}$. Let $Y_1=\pi_2(K_{p_1}\cap (\{y_0\}\times \phi^{-1}([<0, i_0>]))$ and pick $y_1\in Y_1$. It is clear that $Y_1$ is BIG and $Y_1 \subseteq Y_0$. Also note that $\{y_0\}\times Y_1\subset K_{p_1}$.


Fix $\alpha\in \omega_1$ and assume that for all  $\beta< \alpha $, $y_\beta$, $Y_\beta$,  and $p_\beta$ are defined as follows:
\begin{itemize}
\item[1)] $y_\beta\in Y_\beta$;
\item[2)] $Y_\beta\subseteq Y_\gamma$ for all $\gamma \leq \beta$;
\item[3)] $\{y_\gamma\}\times Y_{\gamma+1}\subset K_{p_{\gamma+1}}$ for all $\gamma <\beta$;
\item[4)] $Y_\beta$ is BIG.
\end{itemize}

If $\alpha$ is a limit ordinal, let $Y_\alpha=\bigcap\{Y_\beta: \beta<\alpha\}$.  Then $Y_\alpha$ is the intersection of a countable decreasing collection of BIG subsets of $X$, so it is BIG by Claim ($\ast$).
Choose $y_\alpha\in Y_\alpha$ and $p_\alpha\in P$.

  Now assume $\alpha$ is a successor, say $\alpha=\alpha_- +1.  $ By Lemma~\ref{b-base}, we fix $\rho_{\alpha}\in 2^{<\omega_1}$ such that $[\rho_{\alpha}]\subseteq \phi(Y_{\alpha_-})$. Take $\eta_\alpha\in \omega_1\setminus \text{Dom}(\rho_{\alpha})$ and $i_\alpha\in \{0, 1\}$ such that $\phi(y_{\alpha_-})(\eta_\alpha)\neq i_\alpha$.  The set $Y_{\alpha_-}\cap \phi^{-1}([\rho_\alpha\cup\{<\eta_\alpha, i_\alpha>\}])$  is clearly BIG and doesn't contain $y_{\alpha_-}$. Hence $\{K_p\cap (\{y_{\alpha_-}\}\times (Y_{\alpha_-}\cap \phi^{-1}([\rho_\alpha\cup\{<\eta_\alpha, i_\alpha>\}])): p\in P\}$ is a $P$-directed compact cover of $\{y_{\alpha_-}\}\times (Y_{\alpha_-}\cap \phi^{-1}([\rho_\alpha\cup\{<\eta_\alpha, i_\alpha>\}]))$. Therefore $\{\phi(\pi_2(K_p\cap (\{y_{\alpha_-}\}\times (Y_{\alpha_-}\cap \phi^{-1}([\rho_\alpha\cup\{<\eta_\alpha, i_\alpha>\}])))): p\in P\}$ is a $P$-directed compact cover of $[\rho_\alpha\cup\{<\eta_\alpha, i_\alpha>\}]$.   Then by assumption, there is a $p_\alpha\in P$ such that $\phi(\pi_2(K_{p_\alpha}\cap (\{y_{\alpha_-}\}\times (Y_{\alpha_-}\cap \phi^{-1}([\rho_\alpha\cup\{<\eta_\alpha, i_\alpha>\}]))))$ is BIG in $2^{\omega_1}$. Let $Y_\alpha=\pi_2(K_{p_\alpha}\cap (\{y_{\alpha_-}\}\times (Y_{\alpha_-}\cap \phi^{-1}([\rho_\alpha\cup\{<\eta_\alpha, i_\alpha>\}])))$ which is clearly BIG, so condition 4) holds. It is straightforward to verify that $Y_{\alpha_-}\supseteq Y_\alpha$ and $\{y_{\alpha_-}\}\times Y_\alpha \subset K_{p_\alpha}$, so conditions 2) and 3) hold. Take $y_\alpha\in Y_\alpha$.



Therefore by transfinite induction, we obtain an uncountable collection $\{y_\alpha, Y_\alpha, p_\alpha: \alpha<\omega_1\}$ such that conditions 1) - 4) are satisfied. Since $P$ has calibre $(\omega_1, \omega)$, there exists a countably infinite subcollection of $\{p_{\alpha+1}: \alpha\in \omega_1\}$ which is bounded.  Without loss of generality, we can list this subcollection as $\{p_{\alpha_n+1}:n\in\omega\}$ such that $\alpha_n < \alpha_{n+1}$ for all $n\in \omega$. Fix $q\in P$ such that $p_{\alpha_n+1}\leq q$ for all $n<\omega$; then $K_{p_{\alpha_n+1}}\subset K_q$ for all $n\in \omega$. Suppose $n<m$.  Then by conditions 2) and 3), $(y_{\alpha_n}, y_{\alpha_m})\in \{y_{\alpha_n}\}\times Y_{\alpha_m}\subset \{y_{\alpha_n}\}\times Y_{\alpha_n+1}\subset K_{p_{\alpha_n+1}}\subset K_q$. So we see that $(y_{\alpha_n}, y_{\alpha_m})\in  K_q$ for any $m, n\in \omega$ with $n<m$. Let $y$ be a cluster point of $\{y_{\alpha_n}: n\in \omega\}$.  Then by the compactness of $K_q$, $(y_{\alpha_n}, y)\in K_q$ for all $n\in\omega$. Therefore $(y, y)\in K_q$. However, $K_q$ is disjoint from the diagonal of $X$ by assumption, so we have reached a contradiction. \end{proof}


Since any directed set with calibre~$\omega_1$ also has calibre~$(\omega_1,\omega)$, the result above also holds for any directed set with calibre~$\omega_1$.

Since $\mathcal{K}(M)$ has calibre~$(\omega_1, \omega)$ for any separable metric space $M$, we obtain the following corollary.

\begin{corollary}\label{nobig} Let $X$ be a compact space that maps onto $2^{\omega_1}$.

Let $M$ be a separable metric space such that for any $\mathcal{K}(M)$-directed compact cover $\{K_F: F\in \mathcal{K}(M)\}$ of $2^{\omega_1}$ , there is an $F\in \mathcal{K}(M)$ such that $K_F$ is BIG. Then $X$ does not have an $M$-diagonal.

\end{corollary}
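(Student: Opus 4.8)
The plan is to derive Corollary~\ref{nobig} as an immediate consequence of Theorem~\ref{b_nPd} by checking that $\mathcal{K}(M)$ is a directed set satisfying the calibre hypothesis of that theorem, and then translating the conclusion about $X^2\setminus\Delta$ into the language of $M$-diagonals. Concretely, I would first recall that for any separable metric space $M$, the space $\mathcal{K}(M)$ of compact subsets (ordered by inclusion) has calibre~$(\omega_1,\omega)$. This is the only genuinely new verification the corollary requires, since everything else is already packaged in the theorem.

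First I would set $P=\mathcal{K}(M)$ and verify calibre~$(\omega_1,\omega)$. Here I would use that $(\mathcal{K}(M),d^H)$ is itself a separable metric space (as noted in the excerpt, e.g.\ via \cite{C74}). Given any $\omega_1$-sized subset $\mathcal{S}=\{F_\xi:\xi<\omega_1\}$ of $\mathcal{K}(M)$, separability lets me extract an $\omega_1$-sized subcollection $\mathcal{S}_0$ lying in a single ball of $d^H$-radius $1$; refining, I can find an $\omega_1$-sized subcollection converging to some $F\in\mathcal{K}(M)$, and in particular extract a countable subcollection $\{F_{\xi_n}:n\in\omega\}$ converging to $F$. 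Any finite union $F_{\xi_{n_1}}\cup\cdots\cup F_{\xi_{n_k}}$ is compact and hence an upper bound in $\mathcal{K}(M)$; more to the point, the whole countable family is bounded because $F\cup\bigcup_n F_{\xi_n}$ is a convergent-sequence union, which is compact in $M$. Thus every countable subset of the chosen $\omega_1$-sized subcollection has an upper bound in $\mathcal{K}(M)$, giving calibre~$(\omega_1,\omega)$.

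Next I would invoke Theorem~\ref{b_nPd} with $P=\mathcal{K}(M)$. The hypothesis of the theorem—that for every $P$-directed compact cover $\{K_p:p\in P\}$ of $2^{\omega_1}$ there is a $p$ with $K_p$ BIG—is exactly the hypothesis placed on $M$ in the corollary. Since $X$ maps continuously onto $2^{\omega_1}$ and $P$ has calibre~$(\omega_1,\omega)$, the theorem yields that $X^2\setminus\Delta$ has no $\mathcal{K}(M)$-directed compact cover. By the definition of an $M$-diagonal (namely that $X^2\setminus\Delta$ is $M$-dominated, i.e.\ admits a $\mathcal{K}(M)$-directed compact cover), this is precisely the statement that $X$ does not have an $M$-diagonal, completing the proof.

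The main obstacle, and the only place requiring care, is the calibre computation: one must ensure the countable subcollection extracted is genuinely $d^H$-convergent so that its union with the limit is compact, since an arbitrary countable bounded-below subset of $\mathcal{K}(M)$ need not have a compact union. Using convergence (so that the union is a sequence together with its limit, which is compact) is the clean way around this, and it is exactly why calibre~$(\omega_1,\omega)$ rather than the stronger calibre~$\omega_1$ is the natural property here.
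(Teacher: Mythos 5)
Your proposal is correct and takes essentially the same route as the paper: the paper obtains this corollary immediately by applying Theorem~\ref{b_nPd} with $P=\mathcal{K}(M)$, citing without proof the standard fact that $\mathcal{K}(M)$ has calibre~$(\omega_1,\omega)$ for every separable metric space $M$. Your only addition is to actually verify that calibre fact (via separability of $(\mathcal{K}(M),d^H)$, extraction of a $d^H$-convergent sequence, and the observation that a convergent sequence of compacta together with its limit has compact union), which is the standard argument and is sound.
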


\begin{theorem}\label{main} Any compact space with a  $\mathbb{Q}$-diagonal is metrizable.
\end{theorem}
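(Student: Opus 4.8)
The plan is to reduce the metrizability of a compact space $X$ with a $\mathbb{Q}$-diagonal to the combinatorial obstruction already established in Corollary~\ref{nobig}. The key structural fact I would invoke is the classical dichotomy for compact spaces: a compact space is metrizable if and only if it does not map continuously onto $2^{\omega_1}$ (equivalently, $X$ is metrizable iff it has countable weight, and a non-metrizable compact space always admits a continuous surjection onto $2^{\omega_1}$ by a theorem of the Shapirovskii/Tkachenko type). So the whole argument should be a proof by contraposition: I would assume $X$ is a compact space that is \emph{not} metrizable and derive that $X$ cannot have a $\mathbb{Q}$-diagonal.

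First I would take a non-metrizable compact $X$ and fix a continuous surjection $\phi: X \to 2^{\omega_1}$ supplied by the dichotomy above. Next, I would observe that $\mathbb{Q}$-domination is governed by $\mathcal{K}(\mathbb{Q})$, which is a separable metric directed set, so in particular $\mathcal{K}(\mathbb{Q})$ has calibre~$(\omega_1,\omega)$ and the hypotheses of Corollary~\ref{nobig} are in force with $M = \mathbb{Q}$. The crucial input is Theorem~\ref{k(q)_d_b}: any $\mathcal{K}(\mathbb{Q})$-directed compact cover of $2^{\omega_1}$ must contain a BIG member. This is exactly the condition ``for any $\mathcal{K}(\mathbb{Q})$-directed compact cover $\{K_F : F \in \mathcal{K}(\mathbb{Q})\}$ of $2^{\omega_1}$ there is an $F$ with $K_F$ BIG'' required by Corollary~\ref{nobig}. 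Applying that corollary with $M = \mathbb{Q}$ then yields directly that $X$ does not have a $\mathbb{Q}$-diagonal, i.e.\ $X^2 \setminus \Delta$ is not $\mathbb{Q}$-dominated. This is the desired contradiction, completing the contrapositive.

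I expect the only genuinely delicate point to be the invocation of the compact-space dichotomy: one must cite the correct form of the statement that a non-metrizable compact (Hausdorff) space maps continuously onto $2^{\omega_1}$, and verify that the spaces here satisfy the separability/weight hypotheses needed for it (the paper works with completely regular $T_1$ spaces, and compactness upgrades this to normality, so the classical theorem applies cleanly). Everything else is a bookkeeping assembly: $\mathbb{Q}$ is separable metric, Theorem~\ref{k(q)_d_b} supplies the BIG member, and Corollary~\ref{nobig} packages the obstruction. The short proof is therefore essentially ``if $X$ were non-metrizable it would map onto $2^{\omega_1}$, but then Theorem~\ref{k(q)_d_b} and Corollary~\ref{nobig} forbid a $\mathbb{Q}$-diagonal.''
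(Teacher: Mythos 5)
There is a genuine gap, and it is fatal to the argument as written: the ``classical dichotomy'' you invoke --- that every non-metrizable compact Hausdorff space maps continuously onto $2^{\omega_1}$ --- is false. The one-point compactification $A(\omega_1)$ of a discrete set of size $\aleph_1$ (the very space used in the example at the end of this paper) is compact and non-metrizable, but it is scattered, and continuous images of scattered compacta are scattered, so it admits no continuous surjection onto the crowded space $2^{\omega_1}$. The split interval is another counterexample. What is true (Shapirovskii-type results) is much weaker and has hypotheses beyond non-metrizability; no ZFC theorem hands you a surjection $X \to 2^{\omega_1}$ from non-metrizability alone. Since your entire contrapositive hinges on producing such a surjection, the proof collapses at its first step. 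Your final assembly is fine --- once one \emph{has} a compact space mapping onto $2^{\omega_1}$ with a $\mathbb{Q}$-diagonal, Theorem~\ref{k(q)_d_b} plus Corollary~\ref{nobig} do yield a contradiction --- but producing that space is precisely the hard part of the theorem, not a citation.

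The paper gets around this by splitting on tightness rather than on metrizability. If $X$ has countable tightness, then a compact space with an $M$-diagonal ($M$ separable metric) is metrizable by Theorem~2.9 of \cite{COT11}, with no need for $2^{\omega_1}$ at all. If $X$ has uncountable tightness, one gets (via free sequences) a closed subspace $Y$ mapping continuously onto $\omega_1+1$, then passes to the $\omega$-bounded subspace $Y'=\psi^{-1}([0,\omega_1))$, restricts the $\mathcal{K}(\mathbb{Q})$-directed cover of $X^2\setminus\Delta$ to a $\mathcal{K}(\mathbb{Q})$-directed \emph{closed} cover of $Y'^2\setminus\Delta$, and invokes Theorem~3.1 of \cite{ST16} to extract a compact separable subspace $Z\subseteq Y'$ mapping onto $\mathbb{I}^{\omega_1}$, hence a compact subspace $Z'$ mapping onto $2^{\omega_1}$. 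Note that this crucial step itself \emph{uses} the domination hypothesis on the complement of the diagonal; it is not a purely topological fact about non-metrizable compacta. Only then, with $Z'$ in hand --- a compact space mapping onto $2^{\omega_1}$ that inherits the $\mathbb{Q}$-diagonal --- does the argument you describe (Theorem~\ref{k(q)_d_b} feeding Corollary~\ref{nobig}) deliver the contradiction. To repair your proof you would need to replace the false dichotomy with this tightness case-split and the machinery of \cite{COT11} and \cite{ST16}.
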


\begin{proof} Let $X$ be a compact space with a $\mathbb{Q}$-diagonal.

If the tightness of $X$ is countable, $X$ is metrizable by Theorem~2.9 in \cite{COT11}. Assume that $X$ has uncountable tightness. Then there exists a closed subset $Y$ of $X$ that maps continuously onto $\omega_1+1$. We fix a continuous onto map $\psi: Y\rightarrow \omega_1+1$ and define $Y'=\{x\in Y: \psi(x)<\omega_1\}$. Then $Y'$ is $\omega$-bounded. Let $\mathcal{C}=\{K_F: F\in\mathcal{K}(\mathbb{Q})\}$ be a $\mathcal{K}(\mathbb{Q})$-directed compact cover of $X^2 \setminus \Delta$. Then $\mathcal{D}=\{K_F\cap (Y'^2 ): F\in \mathcal{K}(\mathbb{Q})\}$ is a $\mathcal{K}(\mathbb{Q})$-directed closed cover of $Y'^2\setminus \Delta$. By
Theorem 3.1 in \cite{ST16}, there is a compact separable subspace $Z$ of $Y'$ which maps continuously onto $\mathbb{I}^{\omega_1}$. Hence we get a compact subspace $Z'$ of $Z$ which maps continuously onto $2^{\omega_1}$. We see that $Z'$ inherits a $\mathbb{Q}$-diagonal from $X$. However, by Theorem~\ref{k(q)_d_b} and Corollary~\ref{nobig}, $Z'$ does not have a $\mathbb{Q}$-diagonal which is a contradiction.  \end{proof}

\begin{corollary} Let $P$ be a directed set with $\mathcal{K}(\mathbb{Q})\geq_T P$. If a space $X$ is compact and $X^2\setminus \Delta$ has a $P$-directed compact cover, then $X$ is metrizable.
\end{corollary}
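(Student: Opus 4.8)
The plan is to reduce the corollary to Theorem~\ref{main}. Writing $P$ for the given directed set, it suffices to show that if $X^2\setminus\Delta$ carries a $P$-directed compact cover and $\mathcal{K}(\mathbb{Q})\geq_T P$, then $X^2\setminus\Delta$ also carries a $\mathcal{K}(\mathbb{Q})$-directed compact cover; that is, $X$ has a $\mathbb{Q}$-diagonal, whence $X$ is metrizable by Theorem~\ref{main}. This is exactly the transfer statement of the first Lemma of the paper, except that there both sides had the form $\mathcal{K}(M)$, while here the smaller set $P$ is an arbitrary directed set. So the first task is to re-establish that transfer in this slightly more general setting.

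The cleanest way to carry out the transfer is through the dual description of the Tukey relation rather than through a cofinal map. By the standard duality, $\mathcal{K}(\mathbb{Q})\geq_T P$ is witnessed by a map $t\colon P\to\mathcal{K}(\mathbb{Q})$ that sends unbounded subsets of $P$ to unbounded subsets of $\mathcal{K}(\mathbb{Q})$; equivalently, $t^{-1}(\mathcal{A})$ is bounded in $P$ whenever $\mathcal{A}$ is a bounded subset of $\mathcal{K}(\mathbb{Q})$. Fix a $P$-directed compact cover $\{L_p:p\in P\}$ of $X^2\setminus\Delta$. For each $F\in\mathcal{K}(\mathbb{Q})$, set $S_F=\{p\in P: t(p)\subseteq F\}=t^{-1}(\{G\in\mathcal{K}(\mathbb{Q}):G\subseteq F\})$ and define $K_F=\overline{\bigcup\{L_p:p\in S_F\}}$, the closure taken in $X^2\setminus\Delta$. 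Since the down-set of $F$ is bounded (by $F$) in $\mathcal{K}(\mathbb{Q})$, the set $S_F$ is bounded in $P$; choosing an upper bound $p_F$ of $S_F$ gives $L_p\subseteq L_{p_F}$ for every $p\in S_F$, so $\bigcup\{L_p:p\in S_F\}\subseteq L_{p_F}$. As $X^2\setminus\Delta$ is Hausdorff and $L_{p_F}$ is compact, $L_{p_F}$ is closed, so $K_F\subseteq L_{p_F}$ and hence $K_F$ is compact.

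It remains to check that $\{K_F:F\in\mathcal{K}(\mathbb{Q})\}$ is directed and covers. Directedness is immediate and is the point of indexing through preimages: if $F\subseteq F'$ then $S_F\subseteq S_{F'}$, so $K_F\subseteq K_{F'}$, without any monotonicity hypothesis on $t$. For the cover, given $y\in X^2\setminus\Delta$ pick $p_0$ with $y\in L_{p_0}$ and put $F=t(p_0)$; then $t(p_0)\subseteq F$, so $p_0\in S_F$ and $y\in L_{p_0}\subseteq K_F$. Thus $X$ has a $\mathbb{Q}$-diagonal and Theorem~\ref{main} finishes the argument. The main obstacle is precisely the step just used: a general Tukey quotient map $\mathcal{K}(\mathbb{Q})\to P$ need not be order-preserving, and $P$ need not be Dedekind complete, so the naive recipe $K_F=L_{\phi(F)}$ (which worked in the first Lemma because both sets were of the form $\mathcal{K}(\cdot)$ and Dedekind complete) fails to produce a directed family. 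Passing to the dual map $t$ and indexing by preimages of down-sets restores monotonicity automatically, while the unbounded-preserving property of $t$ is exactly what keeps each $K_F$ inside a single compact $L_{p_F}$.
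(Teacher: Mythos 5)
Your proof is correct, and your overall route is the inevitable one---manufacture a $\mathcal{K}(\mathbb{Q})$-directed compact cover of $X^2\setminus\Delta$ from the given $P$-directed one, then quote Theorem~\ref{main}---which is also what the paper intends, since it states this corollary without proof as an instance of the transfer principle in its first lemma (if $\mathcal{K}(M)\geq_T\mathcal{K}(N)$, then every $N$-dominated space is $M$-dominated). The genuine difference is in how you execute the transfer, and your extra care is warranted. The paper's lemma re-indexes the cover by $K_F=L_{\phi(F)}$ using an order-preserving map $\phi$ with cofinal image, and the paper justifies extracting such a $\phi$ from a Tukey quotient only under the standing assumption that both posets are Dedekind complete, which holds for hyperspaces $\mathcal{K}(M)$ but is not available for the arbitrary directed set $P$ of this corollary. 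You instead pass to the dual witness of $\mathcal{K}(\mathbb{Q})\geq_T P$, an unbounded-set-preserving map $t\colon P\to\mathcal{K}(\mathbb{Q})$ (this equivalence is classical Tukey theory, though never stated in the paper), and define $K_F$ as the closure of $\bigcup\{L_p: t(p)\subseteq F\}$; monotonicity in $F$ is then automatic from monotonicity of preimages of down-sets, while compactness of $K_F$ follows because $\{p\in P: t(p)\subseteq F\}$ is bounded, so the union sits inside a single compact $L_{p_F}$, which is closed in the Hausdorff space $X^2\setminus\Delta$. In effect you have proved the transfer lemma in full generality---$D\geq_T E$ implies every space with an $E$-directed compact cover has a $D$-directed compact cover, with no Dedekind completeness hypothesis---which is exactly what this corollary needs; the paper's monotone-map version is shorter but, as literally stated and proved, covers only the hyperspace-to-hyperspace case.
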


The proof of the following result is almost identical with the proof of Theorem~\ref{main}.
\begin{theorem}\label{smspace} Assume $\mathfrak{d}>\mathfrak{b}=\aleph_1$. Any compact space with an $M$-diagonal for some separable metric space $M$ is metrizable.
\end{theorem}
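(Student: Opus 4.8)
The plan is to run the proof of Theorem~\ref{main} essentially verbatim, with $\mathbb{Q}$ replaced throughout by an arbitrary separable metric space $M$, and with the ZFC input Theorem~\ref{k(q)_d_b} replaced by its set-theoretic counterpart Theorem~\ref{d>b=w1}, which is exactly what the hypothesis $\mathfrak{d}>\mathfrak{b}=\aleph_1$ is there to supply. So let $X$ be a compact space with an $M$-diagonal for some separable metric $M$, and assume $\mathfrak{d}>\mathfrak{b}=\aleph_1$. First I would dispose of the countable-tightness case: if the tightness of $X$ is countable, then $X$ is metrizable by Theorem~2.9 in \cite{COT11}, the same citation used in Theorem~\ref{main} (there $\mathbb{Q}$ enters only as a particular separable metric space, so nothing is lost by taking $M$ general). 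It therefore remains to derive a contradiction from the assumption that $X$ has uncountable tightness.

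For the uncountable-tightness case I would reproduce the reduction of Theorem~\ref{main} unchanged. Uncountable tightness yields a closed $Y\subseteq X$ mapping continuously onto $\omega_1+1$; fixing such a map $\psi$ and setting $Y'=\{x\in Y:\psi(x)<\omega_1\}$ gives an $\omega$-bounded subspace. Restricting the $\mathcal{K}(M)$-directed compact cover $\{K_F:F\in\mathcal{K}(M)\}$ of $X^2\setminus\Delta$ to $Y'^2$ produces a $\mathcal{K}(M)$-directed closed cover of $Y'^2\setminus\Delta$, so Theorem~3.1 in \cite{ST16} furnishes a compact separable $Z\subseteq Y'$ mapping continuously onto $\mathbb{I}^{\omega_1}$, and hence a compact $Z'\subseteq Z$ mapping continuously onto $2^{\omega_1}$. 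As in the main theorem, $Z'$ inherits an $M$-diagonal from $X$.

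The contradiction is where the single substitution occurs. Under $\mathfrak{d}>\mathfrak{b}=\aleph_1$, Theorem~\ref{d>b=w1} asserts that for \emph{every} separable metric space $M$, any $\mathcal{K}(M)$-directed compact cover of $2^{\omega_1}$ contains a BIG element. This is precisely the hypothesis demanded by Corollary~\ref{nobig}, so that corollary applies to $M$ and to the compact space $Z'$ (which maps onto $2^{\omega_1}$), giving that $Z'$ cannot have an $M$-diagonal. This contradicts the previous paragraph and completes the proof.

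The point to emphasize — and the only genuine difference from Theorem~\ref{main} — is \emph{why} the set-theoretic hypothesis cannot be dropped here. In the $\mathbb{Q}$ case, Theorem~\ref{k(q)_d_b} obtains the BIG-element conclusion in ZFC by splitting into the cases $\mathfrak{d}=\aleph_1$, $\mathfrak{b}>\aleph_1$, and $\mathfrak{d}>\mathfrak{b}=\aleph_1$, handling the first two through Theorem~\ref{d=w1orb>w1} via the Tukey bound $\mathcal{K}(\mathbb{Q})\leq_T\omega^\omega\times[\omega_1]^{<\omega}$. For a general separable metric $M$ no comparable ZFC Tukey bound on $\mathcal{K}(M)$ is available, so the arguments of Lemma~\ref{d=w1} and Theorem~\ref{d=w1orb>w1} do not transfer; only the case $\mathfrak{d}>\mathfrak{b}=\aleph_1$, covered by Theorem~\ref{d>b=w1} for arbitrary separable metric $M$, survives, which is exactly the hypothesis we impose. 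Beyond this, the main item to verify carefully is that the BIG-element conclusion of Theorem~\ref{d>b=w1} plugs into Corollary~\ref{nobig} for general $M$ — it does, since Corollary~\ref{nobig} is already stated for an arbitrary separable metric space whose $\mathcal{K}(M)$-directed covers of $2^{\omega_1}$ always have a BIG member.
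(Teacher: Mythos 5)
Your proposal is correct and is essentially the paper's own proof: the paper explicitly says the proof of this theorem is ``almost identical with the proof of Theorem~\ref{main}'', and your substitution of Theorem~\ref{d>b=w1} for Theorem~\ref{k(q)_d_b} as the source of the BIG-element hypothesis in Corollary~\ref{nobig} is exactly the intended modification. Your closing remarks on why the set-theoretic assumption is needed for general $M$ (no ZFC Tukey bound analogous to $\mathcal{K}(\mathbb{Q})\leq_T\omega^\omega\times[\omega_1]^{<\omega}$) also match the paper's case analysis.
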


The theorem below provides a positive answer to Problem 4.8 of the paper \cite{COT11}. Its proof is analogous to the one of Theorem 3.4 in \cite{ST16} but we give it here anyway for the sake of completeness.

A family $\mathcal{N}$ is a network with respect to a collection $\mathcal{C}$ of subsets of $X$ if for any $C\in\mathcal{C}$ and open set $U$ in $X$ containing $C$, there exists an $N\in\mathcal{N}$ such that $C\subset N\subset U$. A space $X$ is a Lindel\"of $\Sigma$-space if there exists a countable family $\mathcal{F}$ of subsets of $X$ such that $\mathcal{F}$ is a network with respect to a compact cover $\mathcal{C}$ of the space $X$.

\begin{theorem}\label{main2} Let $X$ be a Tychonoff space with an $M$-diagonal for some separable metric space $M$. Then

\begin{itemize}
\item[1)] if $\mathfrak{d}>\mathfrak{b}=\aleph_1$, then $X$ is cosmic.
\item[2)] if $M$ is the space $\mathbb{Q}$, then $X$ is cosmic.

\end{itemize}
\end{theorem}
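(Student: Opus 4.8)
The plan is to reduce the general statement about arbitrary Tychonoff spaces with an $M$-diagonal to the compact case already handled, by passing to a suitable compactification and exploiting the structure of Lindel\"of $\Sigma$-spaces. The key conceptual point is that ``cosmic'' (having a countable network) is exactly the conjunction of ``Lindel\"of $\Sigma$'' and ``having a small diagonal'' in an appropriate sense; more precisely, a space that is both a Lindel\"of $\Sigma$-space and has a countable network with respect to a compact cover, together with a strong diagonal condition, collapses to having a genuine countable network. So the first step I would take is to show that $X$ must be a Lindel\"of $\Sigma$-space. This should follow by a standard argument: an $M$-diagonal for a separable metric space $M$ forces $X^2\setminus\Delta$ to be dominated by a separable metric space, and domination/cover conditions of this kind transfer to a covering property of $X$ itself. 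I expect this to mirror Theorem 3.4 in \cite{ST16}, which the author explicitly cites as the template.

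\medskip

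Next I would invoke the compact result (Theorem~\ref{main} for part 2), or Theorem~\ref{smspace} for part 1)) at the level of the appropriate ``pieces'' of $X$. The strategy is as follows. Since $X$ is Lindel\"of $\Sigma$, there is a countable family $\mathcal{F}$ that is a network with respect to a compact cover $\mathcal{C}=\{C_\alpha\}$ of $X$. Each $C_\alpha$ is a \emph{compact} subspace of $X$, and an $M$-diagonal is hereditary to subspaces (since $(C_\alpha)^2\setminus\Delta$ is a closed subset of $X^2\setminus\Delta$, it inherits a $\mathcal{K}(M)$-directed compact cover by intersecting). Thus each $C_\alpha$ is a compact space with an $M$-diagonal, so by Theorem~\ref{main} (resp. Theorem~\ref{smspace}) each $C_\alpha$ is metrizable. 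A compact metrizable space is second countable, hence has a countable network. The remaining work is to assemble these countably-many-networks-per-compactum together with the countable family $\mathcal{F}$ into a single \emph{countable} network for all of $X$.

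\medskip

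The assembly step is where I expect the main obstacle to lie, because a priori the compact cover $\mathcal{C}$ may be uncountable, so na\"ively taking the union of a countable network for each $C_\alpha$ need not be countable. The resolution should exploit the network family $\mathcal{F}$: for each $N\in\mathcal{F}$ one considers the members $C_\alpha\subseteq N$, and the key observation is that since each such $C_\alpha$ is second countable and $\mathcal{F}$ separates the compacta from open sets, one can fix, for each $N\in\mathcal{F}$, a \emph{single} countable base/network localized to $N$ that works uniformly. Concretely, I would try to show that the countable family
$$
\mathcal{N}=\Big\{\overline{U}\cap N : U \text{ from a fixed countable family},\ N\in\mathcal{F}\Big\}
$$
is a network for $X$: given $x\in X$ and open $O\ni x$, pick $C_\alpha\ni x$ with $C_\alpha\subseteq$ some open set trapped by an $N\in\mathcal{F}$, use metrizability of $C_\alpha$ to separate $x$ inside $C_\alpha$, and then use the network property of $\mathcal{F}$ to globalize. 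Making the choice of the ``fixed countable family'' genuinely independent of $\alpha$ is the delicate point; the standard trick is that the second-countable structure on each metrizable $C_\alpha$ can be read off from the trace of a single countable collection coming from the Lindel\"of $\Sigma$-structure, so the resulting network remains countable. Once $\mathcal{N}$ is verified to be a countable network, $X$ is cosmic by definition, completing both parts.
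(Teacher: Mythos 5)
Your proposal has two genuine gaps, and together they amount to missing the actual content of the theorem. The first and most serious one is your opening step, ``show that $X$ is a Lindel\"of $\Sigma$-space,'' which you dismiss as a standard argument mirroring Theorem 3.4 of \cite{ST16}. This step is the crux, and it cannot be outsourced: what Proposition 2.6 of \cite{COT11} gives you from the $\mathcal{K}(M)$-directed cover of $X^2\setminus\Delta$ is a collection $\{C_F: F\in\mathcal{K}(M)\}$ of \emph{countably compact} ($\omega$-bounded) sets with a countable network with respect to it --- not compact ones --- so you do not yet have a Lindel\"of $\Sigma$ structure, and your later appeal to ``compact pieces $C_\alpha$'' is circular. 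Upgrading countably compact to compact is exactly where the paper works: if some $C_{F_0}$ is non-metrizable, one traps it in an $\omega$-bounded $D\subseteq X$ with $C_{F_0}\subseteq D\times D$; the compact case (Theorems~\ref{main}, \ref{smspace}) only shows $D$ cannot be \emph{compact}; then Theorem 3.1 of \cite{ST16} applied to the non-compact $\omega$-bounded $D$ produces a compact $Z\subseteq D$ mapping continuously onto $2^{\omega_1}$, and the new machinery of this paper (Theorem~\ref{b_nPd} combined with Theorem~\ref{k(q)_d_b} and Theorem~\ref{d>b=w1}) shows such a $Z$ cannot have an $M$-diagonal --- a contradiction. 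Citing \cite{ST16} as a black box is not available to you, since that paper's theorem concerns $\mathbb{P}$-diagonals (or assumes CH); handling $\mathcal{K}(\mathbb{Q})$ (and $\mathfrak{d}>\mathfrak{b}=\aleph_1$) is precisely what is new here.

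The second gap is your assembly step. Even granting that $X$ is Lindel\"of $\Sigma$ with every compact subspace metrizable, the implication ``Lindel\"of $\Sigma$ $+$ all compacta metrizable $\Rightarrow$ cosmic'' is not a theorem you can cite, and your sketch (a family $\{\overline{U}\cap N\}$ with a ``fixed countable family'' whose existence is the admitted delicate point) does not prove it. The paper never needs such an implication, because it keeps the $\Sigma$-structure on $X^2\setminus\Delta$ rather than transferring it to $X$: once each $C_F$ is shown compact, $X^2\setminus\Delta$ is Lindel\"of $\Sigma$ (hence so is $X$, as its continuous image), and --- this is the step your route discards --- each compact $C_F$ sits inside some $N\in\mathcal{N}$ with $C_F\subseteq N\subseteq\overline{N}\subseteq X^2\setminus\Delta$, so the countably many such $\overline{N}$ witness that $\Delta$ is a $G_\delta$-set in $X^2$. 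The conclusion then follows from the classical theorem that a Lindel\"of $\Sigma$-space with a $G_\delta$-diagonal is cosmic (Theorem 2.1.8 in \cite{Ar78} and Problem 266 in \cite{TK14}). By moving the Lindel\"of $\Sigma$ structure onto $X$ itself and forgetting the diagonal, you lose access to the $G_\delta$-diagonal, which is the bridge that makes the last step routine.
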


\begin{proof}We see that every compact subspace of $X$ is metrizable using Theorem~\ref{main} and Theorem~\ref{smspace}.

Let $\{K_F: F\in \K(M)\}$ be a $\mathcal{K}(M)$-directed compact cover of $X^2\setminus \Delta$. By Proposition 2.6 in \cite{COT11}, there exists a collection $\mathcal{C}=\{C_F: F\in \K(M)\}$ of subsets of $X^2\setminus \Delta$ such that each $C_F$ is $\omega$-bounded and there is a countable network $\mathcal{N}$ with respect to $\mathcal{C}$. More specifically, $C_F$ is countably compact for each $F\in \K(M)$.

We claim that $C_F$ is metrizable for each $F\in \K(M)$. Suppose not. Pick an $F_0\in \K(M)$ such that $C_{F_0}$ is non-metrizable. Since the projections of $C_{F_0}$ on $X$ are $\omega$-bounded, we can choose $D\subset X$ such that $D$ is $\omega$-bounded and $C_{F_0}\subset D\times D$. Clearly, $D\times D$ is non-metrizable, hence $D$ is not compact. Since $D^2\setminus \Delta$ has a $\K(M)$-directed cover of closed subsets, applying Theorem~3.1 in \cite{ST16}, we can find a compact subset $Z$ of $D$ which maps continuously onto $2^{\omega_1}$. Then $Z$ has an $M$-diagonal inherited from $X$, but by Theorem~\ref{b_nPd}, Theorem~\ref{k(q)_d_b} and Theorem~\ref{d>b=w1}, $Z$ doesn't have an $M$-diagonal. This is a contradiction.

For each $F\in \K(M)$, $C_F$ is compact since it is countably compact and metrizable. Hence $X^2\setminus \Delta$ is a Lindel\"of $\Sigma$-space witnessed by $\{C_F: F\in \K(M)\}$ and $\mathcal{N}$. Then $X$ is also a Lindel\"of $\Sigma$-space. For each $C_F\in \mathcal{C}$, there exists an $N\in \mathcal{N}$ such that $C_F\subset N \subset \overline{N}\subset X^2\setminus \Delta$. Since $\mathcal{N}$ is countable, $X$ has a $G_\delta$-diagonal. Hence $X$ is cosmic by Theorem~2.1.8 in \cite{Ar78} and Problem 266 in \cite{TK14}. \end{proof}

Since any pseudocompact space with a countable network is compact, we obtain the following corollary.

\begin{corollary} Let $X$ be a pseudocompact space with an $M$-diagonal for some separable metric space $M$. Then

\begin{itemize}
\item[1)] if $\mathfrak{d}>\mathfrak{b}=\aleph_1$, then $X$ is metrizable.
\item[2)] if $M$ is the space $\mathbb{Q}$, then $X$ is metrizable.

\end{itemize}
\end{corollary}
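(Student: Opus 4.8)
The plan is to follow the strategy of Guerrero S\'{a}nchez and Tkachuk's Theorem 3.4 in \cite{ST16}, reducing the problem to showing that $X$ is simultaneously a Lindel\"of $\Sigma$-space and has a $G_\delta$-diagonal, from which cosmicity will follow by Theorem~2.1.8 in \cite{Ar78} together with Problem 266 in \cite{TK14}. The first observation is that both hypotheses guarantee, via Theorem~\ref{main} (when $M=\Q$) and Theorem~\ref{smspace} (under $\mathfrak{d}>\mathfrak{b}=\aleph_1$), that every compact subspace of $X$ is metrizable, since any such subspace inherits an $M$-diagonal from $X$.

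Next I would extract a suitable cover. Fixing a $\K(M)$-directed compact cover $\{K_F:F\in\K(M)\}$ of $X^2\setminus\Delta$ and applying Proposition 2.6 of \cite{COT11}, one obtains a family $\mathcal{C}=\{C_F:F\in\K(M)\}$ of countably compact (indeed $\omega$-bounded) subsets of $X^2\setminus\Delta$ admitting a countable network $\mathcal{N}$. The crux is to prove that each $C_F$ is metrizable. Suppose some $C_{F_0}$ is not; since its coordinate projections are $\omega$-bounded, I would choose an $\omega$-bounded $D\subseteq X$ with $C_{F_0}\subseteq D\times D$, so that $D\times D$ is non-metrizable and hence, by the first step, $D$ cannot be compact. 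Because $D^2\setminus\Delta$ inherits a $\K(M)$-directed cover by closed sets, Theorem 3.1 of \cite{ST16} furnishes a compact $Z\subseteq D$ that maps continuously onto $2^{\omega_1}$; but $Z$ then carries an $M$-diagonal, contradicting the combination of Theorem~\ref{b_nPd}, Theorem~\ref{k(q)_d_b}, and Theorem~\ref{d>b=w1}, which forbid any compact preimage of $2^{\omega_1}$ from having an $M$-diagonal in either case.

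Once every $C_F$ is known to be metrizable, it is in fact compact (countably compact together with metrizable), so $\mathcal{C}$ is a compact cover admitting the countable network $\mathcal{N}$, witnessing that $X^2\setminus\Delta$---and hence $X$---is a Lindel\"of $\Sigma$-space. Finally, using that for each $C_F\in\mathcal{C}$ there is an $N\in\mathcal{N}$ with $C_F\subseteq N\subseteq\overline{N}\subseteq X^2\setminus\Delta$ and that $\mathcal{N}$ is countable, I would conclude that $X$ has a $G_\delta$-diagonal, completing the argument. I expect the metrizability of the pieces $C_F$ to be the main obstacle: it is precisely here that the whole machinery of the paper---the BIG/non-BIG dichotomy in $2^{\omega_1}$ and the Tukey bound on $\K(\Q)$---is brought to bear, through the reduction to a compact image of $2^{\omega_1}$ and the failure of an $M$-diagonal there.
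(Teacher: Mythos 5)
Your argument never finishes the job: it establishes that $X$ is cosmic, but the statement to be proved is that $X$ is \emph{metrizable}, and the pseudocompactness of $X$ --- the one hypothesis that distinguishes this corollary from Theorem~\ref{main2} --- is never used anywhere in your proof. Cosmicity alone does not imply metrizability (countable spaces are always cosmic, and plenty of them, e.g.\ the sequential fan, are not metrizable), so stopping at ``$X$ is Lindel\"of $\Sigma$ with a $G_\delta$-diagonal, hence cosmic'' leaves a genuine gap. Moreover, everything you wrote up to that point is a reproduction of the paper's proof of Theorem~\ref{main2}; rather than redo that argument, you could simply have cited the theorem, since your $X$ is Tychonoff and satisfies exactly its hypotheses.

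The missing step is precisely the content of the paper's own one-line proof of this corollary: a pseudocompact Tychonoff space with a countable network is compact (a space with a countable network is Lindel\"of; a Lindel\"of regular space is normal, so pseudocompactness gives countable compactness, and countably compact plus Lindel\"of gives compact). Once $X$ is known to be compact, metrizability follows either from Theorem~\ref{main} (case $M=\Q$) or Theorem~\ref{smspace} (case $\mathfrak{d}>\mathfrak{b}=\aleph_1$), or simply from the classical fact that a compact Hausdorff space with a countable network is second countable. The fix is short, but as written your proposal proves a strictly weaker statement than the one asked.
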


\begin{example} There is a non-metrizable compact space $X$ such that  $X^2\setminus \Delta$ has  a $\omega^\omega\times [\omega_1]^{<\omega}$-directed compact cover.
\end{example}

\begin{proof}Let $A(\omega_1)$ be the one-point compactification of $\aleph_1$-many discrete points. Fix a one-to-one mapping $\phi: \omega_1\rightarrow A(\omega_1)^2\setminus \Delta$. Define $\mathcal{C}=\{\phi(F): F\in [\omega_1]^{<\omega}\}$ which is an $[\omega_1]^{<\omega}$-directed compact cover of $A(\omega_1)^2\setminus \Delta$.  Then since $[\omega_1]^{<\omega}\leq_T \omega^\omega\times [\omega_1]^{<\omega}$, we see that $A(\omega_1)^2\setminus \Delta$ has a $ \omega^\omega\times [\omega_1]^{<\omega}$-directed compact cover. \end{proof}

\subsection*{Acknowledgement}

The author would like to thank Gary Gruenhage and the referee for the valuable comments and suggestions which improved the exposition.


\normalsize

\end{document}